\documentclass[12pt]{amsart}
\usepackage[margin=1.0in]{geometry}
\usepackage{amsmath,amssymb,amsfonts,graphicx,color,fancyhdr,psfrag,comment,enumerate,mathabx}
\usepackage[latin1]{inputenc}
\usepackage[hyperpageref]{backref}
\usepackage[colorlinks=true, pdfstartview=FitV, linkcolor=blue, citecolor=blue, urlcolor=blue]{hyperref}
\usepackage[capitalise,noabbrev]{cleveref}
\usepackage{tcolorbox,longfbox}
\allowdisplaybreaks
\usepackage{epstopdf}
\usepackage{mathrsfs}
\usepackage{epsfig}
\usepackage{hyperref}

\usepackage{ifthen}
\usepackage{float}
\usepackage{enumerate}
\usepackage{mathtools}
\usepackage[bottom]{footmisc}
\usepackage{tikz}
\usepackage{comment}
\usetikzlibrary{matrix,shapes,arrows,positioning,chains}
\usepackage{amssymb,amsmath,amsfonts}
\usepackage{bbm}

\numberwithin{equation}{section}

\newtheorem{theorem}{Theorem}[section]
\newtheorem{proposition}[theorem]{Proposition}
\newtheorem{lemma}[theorem]{Lemma}

\newtheorem{remark}[theorem]{Remark}
\newtheorem{corollary}[theorem]{Corollary}

\newcommand{\R}{\mathbb{R}}
\newcommand{\C}{\mathbb{C}}

\begin{document}
\title[On Sobolev spaces for Grushin operator]{On Sobolev spaces for the Grushin operator}

\author[N. Garg and R. Garg]
{Nishta Garg \and Rahul Garg}

\address[N. Garg]{Department of Mathematics, Indian Institute of Science Education and Research Bhopal, Bhopal--462066, Madhya Pradesh, India.}
\email{nishta21@iiserb.ac.in}

\address[R. Garg]{Department of Mathematics, Indian Institute of Science Education and Research Bhopal, Bhopal--462066, Madhya Pradesh, India.}
\email{rahulgarg@iiserb.ac.in}

\subjclass[2020]{Primary: 46E36. Secondary: 22E25, 42B20, 58J35}
\keywords{Grushin operator with drift, Exponentially growing measure, Sobolev spaces, Sobolev embeddings, Algebra properties.}

\begin{abstract}
We define Sobolev spaces associated with the Grushin operator with or without drift. We study some of their norm equivalences and establish embedding and algebra properties. We also prove the boundedness of the Riesz transforms on these spaces. 
\end{abstract}

\maketitle

\section{Introduction}
Sobolev spaces are fundamental in the study of partial differential equations, functional analysis, and variational calculus. They provide a quantitative way to measure the smoothness of a given function in terms of the integrability of its (weak) derivatives. Owing to their vast applications, Sobolev spaces are extensively studied in various settings, including Euclidean spaces, Riemannian manifolds, Lie groups, and doubling metric measure spaces. For our purpose, let us refer to just a few selected ones \cite{Sobolev_embedding_Lie_groups, Sobolev-Embedding_Constant_Lie_group_BPV22, Multipliers_on_fractional_Sobolev_Space_Strichartz, Sobolev_algebras_on_nonunimodular_Lie_groups_Vallarino,  Algebra_properties_Bohnke, Sobolev_algebras_on_Lie_groups_and_Riemannian_Manifold_Coulhon, Bruno-Homogeneous-algebras-via-heat-kernel-estimates-2022, bui2025bilinearfractionalleibnizrules,Bui_Huy_Duong_weightes_Besov_and_Triebel_Lizorkin_spaces_2020}.

\medskip 
In the present article, we define and study non-homogeneous Sobolev spaces associated with the Grushin operator with or without drift. Let us remark that stronger results for homogeneous function spaces and that too in a much more general setup are recently established in \cite{Bruno-Homogeneous-algebras-via-heat-kernel-estimates-2022, bui2025bilinearfractionalleibnizrules,Bui_Huy_Duong_weightes_Besov_and_Triebel_Lizorkin_spaces_2020}, but those results are only in the setup of no non-trivial drift. Our work here deals with a unified study of non-homogeneous Sobolev spaces with or without a drift vector. In that, our motivation comes from the works done in \cite{Sobolev_algebras_on_Lie_groups_and_Riemannian_Manifold_Coulhon, Sobolev_embedding_Lie_groups, Sobolev_algebras_on_nonunimodular_Lie_groups_Vallarino}. 

\medskip 
Before moving to our context, let us first discuss the relevant literature on some Lie groups. Let $G$ be a non-compact connected Lie group with identity $e$ and $X = \{X_{1}, \ldots, X_{l}\}$ be linearly independent left-invariant vector fields on $G$ which satisfy H\"{o}rmander's condition. Let $\sigma$ be the right Haar measure on $G$, $\delta$ the modular function, and take $\chi$ to be a continuous positive character on $G$. Consider the measure $\mu_{\chi}$ on $G$ whose density is $\chi$ with respect to $\sigma$. Let $c_j = (X_j \chi)(e)$, for $j = 1, \ldots, l$, and consider the left-invariant sub-Laplacian with drift $\Delta_{\chi}$ on $G$ which is given by 
\begin{align}
\label{def:sub-Lap-with-drift}
\Delta_{\chi} = -\sum_{j=1}^{l} (X_{j}^{2} + c_j X_j).
\end{align}
When $\chi$ is the trivial character, $\Delta_{\chi}$ is the usual sub-Laplacian $\Delta_{\chi} = \Delta = -\sum_{j=1}^{l} X_{j}^{2}$. 

\medskip 
The authors in \cite{Hebisch-Mauceri-Meda-spectral-multipliers-drift-Lie-group-Math-Z-2005}  studied spectral multipliers for $\Delta_{\chi}$, where they first showed that $\Delta_{\chi}$ is essentially self-adjoint on $L^{2}(d\mu_{\chi})$. Recently, Bruno et al. \cite{Sobolev_embedding_Lie_groups} introduced Sobolev spaces $L^{p}_{\alpha}(d\mu_{\chi})$ and studied various of their properties. For $1 < p < \infty$ and $\alpha \geq 0$, consider the space
$$
L^{p}_{\alpha}(d\mu_{\chi}) := \{ f \in L^{p}(d\mu_{\chi}): \, \Delta_{\chi}^{\alpha/2} f \in L^{p}(d\mu_{\chi})\},
$$
endowed with the norm
$$
\|f\|_{L^{p}_{\alpha}(d\mu_{\chi})} := \|f\|_{L^{p}(d\mu_{\chi})} + \|\Delta_{\chi}^{\alpha/2} f\|_{L^{p}(d\mu_{\chi})}, 
$$
with the usual understanding that for $\alpha = 0$, the space $L^{p}_{\alpha}(d\mu_{\chi})$ is nothing but $L^{p}(d\mu_{\chi})$. 

\medskip 
They established the following embedding and algebra properties of these spaces. Let $d_C$ denote the Carnot-Carath\'eodory distance on $G$ and $d_0$ stands for the local dimension of the metric space $(G, d_C, \sigma)$. 

\begin{theorem}[\cite{Sobolev_embedding_Lie_groups}] \label{thm:embeddings_group_setup}
Let $\chi$ be a positive character of $G$ and $1 < p, \, q < \infty$.
\begin{enumerate}
\item If $\alpha > 0$, then $L^{p}_{\alpha}(d\mu_{\chi}) \hookrightarrow L^{q}(d\mu_{\chi^{q/p} \delta^{1-q/p}})$ for every $q \geq p$ such that $\frac{1}{p}-\frac{1}{q} \leq \frac{\alpha}{d_0}$.
\item If $\alpha \geq d_0/p$, then $L^{p}_{\alpha}(d\mu_{\chi}) \hookrightarrow L^{q}(d\mu_{\chi^{q/p} \delta^{1-q/p}})$ for every $q \geq p$.
\item If $\alpha > d_0/p$, then $L^{p}_{\alpha}(d\mu_{\chi}) \hookrightarrow (\delta \, \chi^{-1})^{1/p} \, L^{\infty}$.
\end{enumerate}
\end{theorem}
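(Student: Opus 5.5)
The plan is to transfer the problem to the driftless setting by a unitary conjugation and then use the known embeddings for the sub-Laplacian with respect to right Haar measure. Set $U f = \chi^{1/p} f$, which is an isometry from $L^p(d\mu_\chi)$ onto $L^p(d\sigma)$. A direct computation with $X_j \chi = c_j \chi$ (true because $\chi$ is a character) gives
\[
\chi^{1/2}\,\Delta_\chi\, \chi^{-1/2} = \Delta + \tfrac{|c|^2}{4}, \qquad |c|^2=\sum_j c_j^2 .
\]
Hence $\Delta_\chi$ is conjugate to $\Delta + b$, where $b = |c|^2/4\ge 0$. In $L^p$, however, the conjugation is by $\chi^{1/p}$ rather than $\chi^{1/2}$, which produces extra first-order terms. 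I would therefore prove the equivalence $\|\Delta_\chi^{\alpha/2} f\|_{L^p(\mu_\chi)} + \|f\|_{L^p(\mu_\chi)} \simeq \|(I+\Delta_{p})^{\alpha/2} (\chi^{1/p} f)\|_{L^p(\sigma)}$ for a suitable driftless-type operator $\Delta_p$. The cleanest route is to work with the heat semigroup $e^{-t\Delta_\chi}$. Its kernel is explicitly $\chi^{-1/2}$ times the Gaussian kernel of $e^{-t(\Delta+b)}$, twisted by $\chi$. I would then write the Bessel potential $(I+\Delta_\chi)^{-\alpha/2} = \Gamma(\alpha/2)^{-1}\int_0^\infty t^{\alpha/2-1}e^{-t}e^{-t\Delta_\chi}\,dt$ as a convolution operator. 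This step is valid because $L^p_\alpha(d\mu_\chi)$ coincides with the range of the Bessel potential on $L^p(d\mu_\chi)$, by the spectral theory of \cite{Hebisch-Mauceri-Meda-spectral-multipliers-drift-Lie-group-Math-Z-2005} and $L^p$-boundedness of imaginary powers.

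Next, write $f = (I+\Delta_\chi)^{-\alpha/2} g$ with $g\in L^p(d\mu_\chi)$, so that $f = g * K_\alpha$ (right convolution, the operator being left-invariant). The key intermediate bound is a pointwise estimate on the kernel $K_\alpha$ after weighting. Compute the kernel of $f\mapsto (\chi^{q/p}\delta^{1-q/p})^{1/q} f = \chi^{1/p}\delta^{1/q-1/p} f$ in terms of $\tilde g = \chi^{1/p} g\in L^p(\sigma)$. Because $\chi$ and $\delta$ are characters, the weights factor through the convolution variable. The result is $\chi^{1/p}\delta^{1/q-1/p}f = \tilde g * \tilde K$, with a modified kernel of the form $\tilde K(x) = \chi^{1/p}(x)\,\delta^{1/q-1/p}(x)\, K_\alpha(x)$, up to the $\delta$-factor from using right Haar measure. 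The problem then reduces to a Young-type inequality on $G$ for right Haar measure: $\|\tilde g * \tilde K\|_{L^q(\sigma)}\lesssim \|\tilde g\|_{L^p(\sigma)}$. This holds provided $\tilde K$ lies in $L^{r}$ for $1+1/q = 1/p+1/r$, with the correct modular weight (this is exactly why $\delta^{1-q/p}$ appears).

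The estimates on $\tilde K$ come from the Gaussian heat kernel bounds for $\Delta$ (Varopoulos): $h_t(x)\lesssim V(\sqrt t)^{-1} e^{-d_C(x)^2/(Ct)}$, with $V(r)\sim r^{d_0}$ for $r\le1$ and $V(r)\le e^{Cr}$. Combined with the factor $e^{-tb}$, the factor $e^{-t}$ in the Bessel integral, and $|\chi^{s}(x)|\le e^{C|s|\,d_C(x)}$, the heat kernel part gives Gaussian decay that beats every exponential weight coming from $\chi$ and $\delta$. Near the identity, $\tilde K(x)\lesssim d_C(x)^{\alpha-d_0}$ (with a logarithm if $\alpha=d_0$); away from the identity, $\tilde K$ decays exponentially faster than the volume growth. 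Integrability of $\tilde K^r$ near $e$ then requires $r(d_0-\alpha)<d_0$, i.e. $1/p-1/q<\alpha/d_0$, which covers the strict case of (1), all of (2), and also (3) by taking $q=\infty$, $r=p'$, which needs $\alpha>d_0/p$. Part (2) follows from (1) by monotonicity in $\alpha$ together with interpolation with a slightly smaller $\alpha$.

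\textbf{Main obstacle.} The endpoint $1/p-1/q=\alpha/d_0$ in (1) is where Young's inequality fails. There I would replace Young's inequality by the Hardy--Littlewood--Sobolev inequality on the local part, split as $\tilde K = \tilde K\mathbbm 1_{B(e,1)} + \tilde K\mathbbm 1_{B(e,1)^c}$. The local piece is dominated by the Riesz-type kernel $d_C^{\alpha-d_0}$, which is of weak type $L^{d_0/(d_0-\alpha)}$ locally, and the weak Young inequality applies. The global piece is in every $L^r$. A further technical point is verifying carefully that the twisting by $\chi^{1/p}$ and the modular factors combine into exactly the weight $\chi^{q/p}\delta^{1-q/p}$; I expect this bookkeeping to be the most error-prone step.
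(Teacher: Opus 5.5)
The paper does not prove this theorem; it quotes it from \cite{Sobolev_embedding_Lie_groups}. Its Grushin analogue, Theorem~\ref{thm:embedding}, handles part (i) by transferring through the isomorphism of Proposition~\ref{prop:Relation_between_sobolev_norm_with_and_without_drift} to a known driftless embedding. It proves the $L^\infty$ part just as you propose for (3): write $f=(G_\nu+cI)^{-\alpha/2}g$, apply H\"older, and split the kernel integral at distance $1$. Your explicit-kernel route to (1)--(2) via Young/O'Neil is a legitimate alternative that avoids the isomorphism and interpolation machinery. As written, however, it has a gap at infinity.

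\textbf{The decay claim is false.} You say the heat kernel ``gives Gaussian decay that beats every exponential weight''; that does not hold for the Bessel potential. For $d_C(x)\ge 1$, integrating $t^{\alpha/2-1}e^{-(1+b)t}e^{-d_C(x)^2/(Ct)}$ over $t\in(0,\infty)$ gives only exponential decay, of order $e^{-2\sqrt{(1+b)/C}\,d_C(x)}$. Here $C$ is the unspecified Gaussian constant; this is the modified Bessel asymptotics the paper uses in Theorem~\ref{thm:embedding}(ii). Away from $e$, this decay must absorb several exponentials with fixed rates:
\begin{itemize}
\item the weight $(\chi/\delta)^{1/p}$, together with the factor $\chi^{-1/2}$ from the twisting;
\item the modular factor $\delta^{\pm1/2}$ carried by the convolution kernel of $e^{-t\Delta}$ when $G$ is non-unimodular;
\item the modular weight in Young's inequality;
\item the exponential volume growth.
\end{itemize}
Nothing guarantees that $2\sqrt{(1+b)/C}$ exceeds all of these, so $\tilde K\in L^r$ off $B(e,1)$ is not established.

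The fix is the device the paper uses in Theorem~\ref{thm:embedding}(ii), where $c$ is fixed above an explicit threshold. The $L^p_\alpha(d\mu_\chi)$-norm is equivalent to $\|(\kappa I+\Delta_\chi)^{\alpha/2}f\|_{L^p(d\mu_\chi)}$ for every $\kappa>0$, by the same proof as Lemma~\ref{lem:Equivalence_of_norms_sobolev_space}. With $\kappa$ large, the $t\ge1$ part of the subordination integral decays like $e^{-c_0\sqrt{\kappa}\,d_C(x)}$, which beats any prescribed exponential. The $t\le 1$ part is genuinely Gaussian and causes no trouble.

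\textbf{The bookkeeping should be done on left Haar measure.} Your formula for $\tilde K$ is not right; work on $\lambda=\delta\sigma$ rather than on $\sigma$. Multiplication by the character $(\chi/\delta)^{1/p}$ maps $L^p(d\mu_\chi)$ isometrically onto $L^p(\lambda)$, and maps $L^q(d\mu_{\chi^{q/p}\delta^{1-q/p}})$ isometrically onto $L^q(\lambda)$. For $q=\infty$ it gives exactly the weight in (3). This isometry, not the modular weight in Young's inequality, is where $\delta^{1-q/p}$ comes from. Conjugating $f\mapsto\int f(y)k_\alpha(y^{-1}x)\,d\lambda(y)$ by this character gives convolution with $(\chi/\delta)^{1/p}k_\alpha$, with no leftover $x$-dependent factor. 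On $\sigma$, by contrast, a factor $\delta(x)^{1/q-1/p}$ survives outside the integral, and Young's inequality does not apply to it directly.

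With these two corrections your local analysis goes through, including the O'Neil step at the endpoint, which is legitimate since $\delta\sim 1$ on $B(e,1)$. Finally, (2) needs no interpolation: for finite $q\ge p$ one has $\frac1p-\frac1q<\frac1p\le\frac{\alpha}{d_0}$, so (2) is already contained in the strict case of (1).
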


\begin{theorem}[\cite{Sobolev_embedding_Lie_groups}] \label{thm:algebra_properties_group_setup}
Let $\chi$ be a positive character of $G$ and $\alpha \geq 0$. Let $p_1, \, q_2 \in (1,\infty]$ and $p, \, p_2, \, q_1 \in (1,\infty)$ be such that $\frac{1}{p} = \frac{1}{p_1} + \frac{1}{q_1} = \frac{1}{p_2} + \frac{1}{q_2}$. Then, 
$$
\|fg\|_{L^{p}_{\alpha}(d\mu_{\chi})} \lesssim \|f\|_{L^{p_1}(d\mu_{\chi})} \|g\|_{L^{q_1}_{\alpha}(d\mu_{\chi})} + \|f\|_{L^{p_2}_{\alpha}(d\mu_{\chi})} \|g\|_{L^{q_2}(d\mu_{\chi})}, 
$$
for all $f \in L^{p_1}(d\mu_{\chi}) \cap L^{p_2}_{\alpha}(d\mu_{\chi})$ and $g \in L^{q_1}_{\alpha}(d\mu_{\chi}) \cap L^{q_2}(d\mu_{\chi})$. In particular, $L^{p}_{\alpha}(d\mu_{\chi}) \cap L^{\infty}(d\mu_{\chi})$ is an algebra for every $p \in (1,\infty)$.
\end{theorem}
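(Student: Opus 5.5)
The strategy is to reduce the drifted problem on $L^{p}(d\mu_{\chi})$ to a drift-free problem on $L^{p}(d\sigma)$, and then run a Kato--Ponce type argument for the sub-Laplacian. The reduction uses the unitary map $U_p f = \chi^{1/p} f$ from $L^{p}(d\mu_{\chi})$ onto $L^{p}(d\sigma)$. Since $\chi$ is a character, $X_j(\chi^{1/2}f) = \chi^{1/2}(X_j f + \tfrac{c_j}{2} f)$. Hence
\begin{align*}
\chi^{1/2}\Delta_{\chi}\chi^{-1/2} = \Delta + \tfrac14 |c|^2 ,
\end{align*}
where $|c|^2=\sum_j c_j^2$. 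The right Haar measure also introduces $\delta$ into adjoints, but this can be absorbed in the standard way.

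Multiplication of a product by a power of $\chi$ is harmless, since $\chi^{1/p}(fg) = (\chi^{1/p_1}f)(\chi^{1/q_1}g)$ whenever $1/p=1/p_1+1/q_1$. So the desired inequality should reduce to a Leibniz estimate in $L^{p}(d\sigma)$ for the operator $(\Delta+\tfrac14|c|^2)^{\alpha/2}$, or more precisely its $p$-dependent conjugate. When $p$ varies, the conjugation $\chi^{1/p}\Delta_\chi\chi^{-1/p}$ equals $\Delta$ plus a first-order drift term and a constant. One then has to check that on $L^{p}(d\sigma)$ the norms $\|f\|_{p} + \|(\Delta_{\chi})^{\alpha/2} f\|_{p}$ and $\|(I+\Delta)^{\alpha/2}(\chi^{1/p}f)\|_{L^{p}(d\sigma)}$ are equivalent. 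I would prove this equivalence via spectral multiplier theorems of Hebisch--Mauceri--Meda type, together with boundedness of the local Riesz transforms $X_j(I+\Delta)^{-1/2}$.

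After the reduction, the claim is the non-homogeneous fractional Leibniz rule
\begin{align*}
\|(I+\Delta)^{\alpha/2}(FG)\|_{p} \lesssim \|F\|_{p_1}\|(I+\Delta)^{\alpha/2}G\|_{q_1} + \|(I+\Delta)^{\alpha/2}F\|_{p_2}\|G\|_{q_2}
\end{align*}
on $(G,\sigma)$. Note that when the exponents $p_i$ differ, each factor carries a different power of $\chi$. For this step I would follow Coulhon--Russ--Tardivel. Split the resolvent-type operator into a local part, where $G$ is locally doubling with dimension $d_0$ and the heat kernel of $\Delta$ has Gaussian bounds for small times, and a global part. The global part is handled by exponential decay of $e^{-t(I+\Delta)}$ for $t\ge 1$. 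For the local part, write $(I+\Delta)^{\alpha/2}$ via the heat semigroup. For integer $\alpha$, expand $X^{I}(FG)$ by the Leibniz formula and bound the intermediate terms by Gagliardo--Nirenberg interpolation, using the local Riesz transforms. For non-integer $\alpha$, use a Littlewood--Paley/paraproduct decomposition $FG=\sum \Pi$ built from $\psi(t\Delta)$, together with local Littlewood--Paley square function estimates and Fefferman--Stein maximal inequalities, which are valid since the space is locally doubling.

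The algebra statement then follows by taking $p_1=q_2=\infty$ and $p_2=q_1=p$. The main obstacle is the local paraproduct estimate for fractional $\alpha$ on a non-doubling group. There I would first try the Coulhon--Russ--Tardivel route of reducing to $0<\alpha<1$ via a characterization of $\|(I+\Delta)^{\alpha/2}F\|_p$ by the local square function
\begin{align*}
\Bigl(\int_0^1 \bigl|t^{-\alpha/2}\bigl(F-e^{-t\Delta}F\bigr)\bigr|^{2}\, \tfrac{dt}{t}\Bigr)^{1/2}
\end{align*}
(or a pointwise difference version), and then apply it to the product $FG$ directly.
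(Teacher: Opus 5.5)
Your reduction step has a genuine gap, and it is the one you mention in passing but never resolve. (The paper only quotes this result from \cite{Sobolev_embedding_Lie_groups} and defers its own analogue, Theorem~\ref{thm:algebra_properties}, to that reference, so there is no in-paper proof to compare with.)

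The isomorphism $f\mapsto\chi^{1/r}f$ of $L^{r}_{\alpha}(d\mu_{\chi})$ onto $L^{r}_{\alpha}(d\sigma)$ depends on the exponent $r$. The drift-free Leibniz inequality you invoke is just the case $\chi\equiv1$ of the statement, and it uses the same pair $F,G$ in both terms. For the first term to become $\|f\|_{L^{p_1}(d\mu_\chi)}\|g\|_{L^{q_1}_\alpha(d\mu_\chi)}$ you must take $F=\chi^{1/p_1}f$ and $G=\chi^{1/q_1}g$. The second term is then
\begin{align*}
\|\chi^{1/p_1}f\|_{L^{p_2}_{\alpha}(d\sigma)}\,\|\chi^{1/q_1}g\|_{L^{q_2}(d\sigma)}
\sim \|\chi^{\theta}f\|_{L^{p_2}_{\alpha}(d\mu_\chi)}\,\|\chi^{-\theta}g\|_{L^{q_2}(d\mu_\chi)},
\qquad \theta=\tfrac{1}{p_1}-\tfrac{1}{p_2}.
\end{align*}
A nontrivial continuous positive character of a connected group maps onto $(0,\infty)$. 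Hence this is not controlled by $\|f\|_{L^{p_2}_{\alpha}(d\mu_\chi)}\|g\|_{L^{q_2}(d\mu_\chi)}$ unless $\theta=0$, i.e.\ $(p_1,q_1)=(p_2,q_2)$.

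The algebra case is affected as well. With $p_1=q_2=\infty$ and $p_2=q_1=p$, the first term needs $(F,G)=(f,\chi^{1/p}g)$, while the second needs $(F,G)=(\chi^{1/p}f,g)$. So conjugation alone does not reduce the theorem to $\chi\equiv1$, and your closing sentence on the algebra property inherits the gap. The drift-free fractional Leibniz rule, which you single out as the main obstacle, is not where the difficulty for general $\chi$ lies.

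The missing idea is locality. The inhomogeneous norms only see scales $\le 1$, and since $\chi(xy)=\chi(x)\chi(y)$ with $\chi$ bounded above and below on $B(e,1)$, $\chi$ is comparable to the constant $\chi(x)$ on each ball $B(x,1)=xB(e,1)$. This is what lets you give the two terms different powers of $\chi$. One way to carry it out:
\begin{enumerate}
\item Take a bounded-overlap partition of unity $\phi_i=\phi(x_i^{-1}\,\cdot)$ by left translates of a fixed bump, with $\tilde\phi_i\phi_i=\phi_i$.
\item Prove the weighted localization principle $\|h\|^{r}_{L^{r}_{\alpha}(d\mu_\chi)}\sim\sum_i\chi(x_i)\|\phi_ih\|^{r}_{L^{r}_{\alpha}(d\sigma)}$. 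For integer $\alpha$ this follows from the derivative characterization, since the left-invariant derivatives of $\phi_i$ and of $\tilde\phi_i(\chi/\chi(x_i))^{1/r}$ are uniformly bounded. For all $\alpha$, interpolate the maps $h\mapsto(\phi_ih)_i$ and $(h_i)\mapsto\sum_i\tilde\phi_ih_i$ between $L^r_\alpha$ and $\ell^r(L^r_\alpha)$.
\item Apply the $\chi\equiv1$ inequality to $\tilde\phi_if$ and $\tilde\phi_ig$.
\item Split $\chi(x_i)^{1/p}=\chi(x_i)^{1/p_1}\chi(x_i)^{1/q_1}$ in the first term and $\chi(x_i)^{1/p}=\chi(x_i)^{1/p_2}\chi(x_i)^{1/q_2}$ in the second.
\item Resum with H\"older in $\ell^p$, using $p/p_1+p/q_1=p/p_2+p/q_2=1$.
\end{enumerate}
Alternatively, run the Leibniz argument directly in $L^p(d\mu_\chi)$. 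Use local difference functionals for $0<\alpha<1$, where the weight can be frozen pointwise. Then induct with the recursive characterisation $f\in L^p_{\alpha+1}\iff f,\,X_jf\in L^p_\alpha$ together with $X_j(fg)=(X_jf)g+fX_jg$.

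Two smaller points. First, left-invariant fields are skew-adjoint with respect to the right Haar measure, so no $\delta$ enters the adjoints. Second, for $p\neq2$ the conjugate $\chi^{1/p}\Delta_\chi\chi^{-1/p}=\Delta-(1-\tfrac{2}{p})\sum_jc_jX_j+(\tfrac1p-\tfrac1{p^2})|c|^2$ does not commute with $\Delta$. The norm equivalence you need is therefore better obtained for integer $\alpha$ via derivatives and local Riesz transforms, then by interpolation, than via multiplier theorems. This is what the paper does for $G_\nu$ in Proposition~\ref{prop:Relation_between_sobolev_norm_with_and_without_drift}.
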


Recall now that writing the points on the space $\R^{d_1+d_2} (= \R^{d_1} \times \R^{d_2})$ as $x=(x',x'') = (x_1', \ldots, x_{d_1}', x_1'', \ldots, x_{d_2}'')$, one considers the Grushin operator $G = - \Delta_{x'} - |x'|^2 \Delta_{x''}$, which is known to be a hypoelliptic operator and is homogeneous of degree 2 with respect to the non-isotropic dilations $\delta_{r} x = (r x', r^2 x'')$. We denote by $Q$ the corresponding homogeneous dimension $Q = d_1+2d_2$. Various aspects of this operator are extensively studied in the literature. In particular, works concerning its spectral multipliers, Riesz transforms and pseudo-multipliers can be found in \cite{Riesz_transform_Robinson, Riesz-trans-multipliers-Grushin-oper-Jotsaroop-Sanjay-thangavelu-JDM-2014, Martini-Sikora-sharp-multiplier-Grushin-MRL-2012, Martini-Muller-sharp-multiplier-Grushin-Revista-2014, Dallara-Martini-robust-approach-multiplier-grushin-TAMS-2020, Dallara-Martini-aptimal-multiplier-grushin-part-II-JFAA-2022, Dallara-Martini-aptimal-multiplier-grushin-part-I-Revista-2023, Bagchi-Garg-L2-boundedness-pseudo-multiplier-Grushin-JFA-2024, Bagchi-Basak-Garg-Ghosh-sparse-pseudo-multiplier-grushin-I-JFAA-2023, Bagchi-Basak-Garg-Ghosh-sparse-pseudo-multiplier-grushin-II-JGA-2024, Bagchi-Basak-Garg-Ghosh-sparse-pseudo-multiplier-grushin-III-JMAA-2024, Dim-free-Riesz-trans-Grushin-oper-PAMS-Sanjay-thangavelu-2014, Dziubanski-Jotsaroop-H1-BMO-Grushin-JFAA-2016, Dziubanski-Sikora-Lie-group-approach-JLT-2021}. 

\medskip In terms of the vector field $X = (X', X'') = (X_1, \ldots, X_{d_1}, \, X_{1,1}, \ldots, X_{d_1,d_2})$, where 
$$
X_{j} = \frac{\partial}{\partial x'_{j}} \quad \text{and} \quad X_{j,k} = x'_{j} \frac{\partial}{\partial x''_{k}}, 
$$
the operator $G$ can be expressed as the negative of the sum of their squares: 
\begin{equation}
G = - X \cdot X = - X' \cdot X' - X'' \cdot X'' = - \sum_{j=1}^{d_1} X_{j}^{2} - \sum_{j=1}^{d_1} \, \sum_{k=1}^{d_2} X_{j,k}^{2}. 
\end{equation}

In a recent work \cite{Garg-Garg-Riesz-transform-Grushin-drift-2026}, we introduced the Grushin operator with drift  $G_{(\nu, b)}$, which is defined as follows. For a non-zero vector $(\nu, b) = (\nu_1, \ldots, \nu_{d_1}, \, b_{1,1}, \ldots, b_{d_1,d_2}) \in \R^{d_1} \times \R^{d_1d_2}$, let us consider the following operator: 
\begin{align}
G_{(\nu,b)} 
&= G - 2 \, (\nu, b) \cdot X \\ 
\nonumber &= -\sum_{j=1}^{d_1} \frac{\partial^2}{\partial x_j'^2} - \sum_{j=1}^{d_1} \sum_{k=1}^{d_2} x_j'^2 \frac{\partial^2}{\partial x_k''^2} - 2 \, \sum_{j=1}^{d_1} \nu_{j} \, \frac{\partial}{\partial x_j'} - 2 \, \sum_{j=1}^{d_1} \sum_{k=1}^{d_2} b_{j,k} \, x_{j}' \frac{\partial}{\partial x_k''}. 
\end{align}

\medskip 
It was proved in \cite[Proposition 2.3]{Garg-Garg-Riesz-transform-Grushin-drift-2026} that $G_{(\nu, b)}$ is symmetric with respect to a positive measure $\mu$ on $\R^{d_1+d_2}$ if and only if $b = 0$, and in this case, upto a scalar multiple, the measure $\mu$ is given by $d\mu = d\mu_{\nu} = e^{2\nu \cdot x'} \, dx$, where $dx$ denote the Lebesgue measure on $\R^{d_1+d_2}$.
Henceforth, we shall denote by $\nu = (\nu_{1}, \ldots, \nu_{d_1})$ a vector in $\R^{d_1}$ and the Grushin operator with drift as 
$$ G_\nu = G_{(\nu, 0)} = G - 2 \, \nu \cdot X' = G - 2 \, \nu \cdot \nabla_{x'},$$ 
duly noticing that for $\nu = 0$, the operator $G_\nu$ reduces to the Grushin operator $G$ with the measure $d\mu_0 (x) = dx$. 

\medskip 
It also turns out that the operator $G_\nu$ is positive-definite and essentially self-adjoint  on $L^2(\R^{d_1+d_2}, d\mu_\nu)$. With abuse of notation, we still denote by $G_\nu$ the extension of $G_\nu$ on $L^p(\R^{d_1+d_2}, d\mu_\nu)$. It was also shown in \cite{Garg-Garg-Riesz-transform-Grushin-drift-2026} that when $\nu \neq 0$, the measure $d\mu_{\nu}$ is of exponential volume growth. Let $B(x, r)$ denote the open ball with center $x$ and radius $r>0$ with respect to the control distance $\rho$ of $G$. We denote the ball volume of $B(x, \sqrt{r})$ with respect to the measure $\mu_\nu$ by $V_\nu(x,r)$. 

\medskip For $1<p<\infty$ and $\alpha \geq 0$, we define the Sobolev space
\begin{align} 
\label{def:sobolev-drift-main}
L^p_\alpha(d\mu_{\nu}) := \{f \in L^p(d\mu_\nu): G_{\nu}^{\alpha/2}f \in L^p(d\mu_\nu)\},
\end{align}
endowed with the norm 
$$
\|f\|_{L^p_\alpha(d\mu_{\nu})} := \|f\|_{L^p(d\mu_\nu)}+\|G_{\nu}^{\alpha/2}f\|_{L^p(d\mu_\nu)},
$$
with the usual understanding that for $\alpha = 0$, the space $L^p_\alpha(d\mu_{\nu})$ is nothing but $L^p(d\mu_{\nu})$. 

\medskip Here is our main result concerning their embedding properties. 

\begin{theorem} \label{thm:embedding}
Let $\nu \in \R^{d_1}$ and $V_{\nu}$ be as defined above (unless stated otherwise, $\nu$ can also be zero). 
\begin{enumerate}[(i)]
\item (Sufficient condition) Let $1<p \leq q<\infty$ and $\frac{1}{p}-\frac{1}{q} \leq \frac{\alpha}{Q}$. Then, 
$$
\|V_\nu(\cdot,1)^{\frac{1}{p}-\frac{1}{q}} \, f\|_{L^q(d\mu_\nu)} \lesssim \|f\|_{L^p_\alpha(d\mu_\nu)}.
$$

\item (Sufficient condition) Let $1<p<\infty$ and $\alpha>Q/p.$ Then, 
$$
\|V_\nu(\cdot,1)^{\frac{1}{p}} f \|_{L^\infty(d\mu_\nu)} \lesssim \|f\|_{L^p_\alpha(d\mu_\nu)}.
$$

\item (Necessary condition) 
Let $\nu \neq 0$. Given $1 < p < \infty, \, 1 \leq q \leq \infty$ and $\eta \geq 0$, for the embedding 
$$
\|V_\nu(\cdot,1)^{\eta} \, f\|_{L^q(d\mu_\nu)} \lesssim \|f\|_{L^p_\alpha(d\mu_\nu)}, 
$$
to hold true, we must have $\eta = \frac{1}{p} - \frac{1}{q}$. \end{enumerate}
\end{theorem}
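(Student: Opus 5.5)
We reduce everything to heat kernel bounds for $G_\nu$. The basic tool is the unitary conjugation $f \mapsto e^{\nu\cdot x'} f$, which intertwines $G_\nu$ on $L^2(d\mu_\nu)$ with $G + |\nu|^2$ on $L^2(dx)$, since the zeroth order term produced is $|\nu|^2$. Writing $T_t = e^{-tG_\nu}$, this yields the kernel with respect to $d\mu_\nu$:
\begin{align*}
h^\nu_t(x,y) = e^{-t|\nu|^2} e^{-\nu\cdot(x'+y')}\, h_t(x,y),
\end{align*}
where $h_t$ is the Grushin heat kernel. By the known Gaussian bounds, $h_t(x,y) \lesssim |B(x,\sqrt t)|^{-1} e^{-c\rho(x,y)^2/t}$.

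Since $|x'-y'| \le \rho(x,y)$, the weight $e^{2\nu\cdot z'}$ varies by at most a factor $e^{O(|\nu| r)}$ on $B(x,r)$. Hence, for $t\le 1$,
\begin{align*}
V_\nu(x,t) \sim e^{2\nu\cdot x'} |B(x,\sqrt t)|.
\end{align*}
Combining this with the Gaussian bound gives
\begin{align*}
h^\nu_t(x,y) \lesssim V_\nu(x,t)^{-1/2} V_\nu(y,t)^{-1/2} e^{-c\rho^2/t}, \qquad t \le 1.
\end{align*}
For $t \ge 1$ the factor $e^{-t|\nu|^2}$ supplies exponential decay when $\nu\ne0$. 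When $\nu=0$ we only use the non-homogeneous norm, so large $t$ is harmless.

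For (i) and (ii), write $f = (I+G_\nu)^{-\alpha/2} g$ with $\|g\|_{L^p(d\mu_\nu)} \lesssim \|f\|_{L^p_\alpha(d\mu_\nu)}$. This uses the equivalence $\|(I+G_\nu)^{\alpha/2}f\|_p \sim \|f\|_p + \|G_\nu^{\alpha/2}f\|_p$, which follows from the spectral multiplier theorems for $G_\nu$ (Marcinkiewicz-type), and which I assume is established in the paper. The Bessel potential kernel is $\Gamma(\alpha/2)^{-1}\int_0^\infty t^{\alpha/2-1}e^{-t}h^\nu_t\,dt$, and I split it at $t=1$.

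For the local part we multiply by the weight $V_\nu(x,1)^{1/p-1/q}$. Using local doubling of order $Q$, namely $V_\nu(x,1) \lesssim t^{-Q/2} V_\nu(x,t)$ for $t\le1$, the local part becomes a fractional-integral type operator on a locally doubling space with dimension bounded by $Q$. Its $L^p\to L^q$ bound for $1/p-1/q<\alpha/Q$ follows via a Hardy–Littlewood–Sobolev argument applied to the symmetrized kernel $V(x)^{-1/2}V(y)^{-1/2}$. Concretely, I would rewrite the operator in terms of $u=V_\nu(\cdot,1)^{1/p}\ldots$ and use Young/Schur estimates with the Gaussian factor. The endpoint $1/p-1/q=\alpha/Q$ needs a genuine HLS (Hedberg-type) argument localized to scale $1$.

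The global part $t\ge1$ is bounded directly via $\|T_t\|_{p\to q}$. Ultracontractivity of $T_1$ with the $V_\nu(\cdot,1)$ weight gives the weight exactly, and the factor $e^{-t}$ makes the $t$-integral converge.

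For (ii), apply Hölder with the $L^{p'}$ norm in $y$ of the kernel. Here $\alpha>Q/p$ makes $\int_0^1 t^{\alpha/2-1} V_\nu(x,t)^{-1/p}\,dt \lesssim V_\nu(x,1)^{-1/p}$.

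For (iii), which needs $\nu\neq0$, I use translations. The control distance is invariant under $x''$-translations but not $x'$-translations. However, taking $f_a(x) = \phi(x'-a, x'')$ with $\phi\in C_c^\infty$ and $a = s\nu$, $s\to\infty$, and working in coordinates where the weight is $e^{2\nu\cdot x'}$, one gets $\|f_a\|_{L^p_\alpha} \sim e^{2\nu\cdot a/p}$. The main obstacle is that $G$ is not $x'$-translation invariant, since the coefficient $|x'|^2$ changes. To handle it, I would instead use dilation in $x''$ combined with $x''$-translation, or compute $G_\nu^{\alpha/2} f_a$ through the conjugated operator $G+|\nu|^2$ and local estimates for even integer $\alpha$, then interpolate. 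Meanwhile $V_\nu(a,1) \sim e^{2\nu\cdot a}|B(a,1)|$, where $|B(a,1)|$ grows only polynomially in $|a|$. Then $\|V^\eta f_a\|_q \approx e^{2\nu\cdot a(\eta+1/q)}$ times polynomial factors. Letting $s\to\infty$ forces $\eta \le 1/p-1/q$, and letting $s\to-\infty$ forces $\eta \ge 1/p-1/q$, so $\eta = 1/p-1/q$.
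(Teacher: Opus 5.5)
Your proposal is correct, modulo writing out the endpoint step you only name, and it takes a genuinely different route from the paper in all three parts.

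\textbf{Part (i).} The paper does no new harmonic analysis here. It combines $V_\nu(x,1)\sim e^{2\nu\cdot x'}|B(x,1)|$ with the isomorphism $T_pf=e^{2\nu\cdot x'/p}f$ of $L^p_\alpha(d\mu_\nu)$ onto $L^p_\alpha(dx)$ (Proposition~\ref{prop:Relation_between_sobolev_norm_with_and_without_drift}) to reduce to $\nu=0$. There the endpoint $\frac1p-\frac1q=\frac{\alpha}{Q}$ is quoted from Bagchi--Basak--Garg--Ghosh, and the rest follows by monotonicity in $\alpha$. Your direct argument from the symmetrized bound $h^\nu_t(x,y)\lesssim V_\nu(x,t)^{-1/2}V_\nu(y,t)^{-1/2}e^{-c\rho(x,y)^2/t}$, $t\le 1$, does go through. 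Off the endpoint no HLS is needed: $\|V_\nu(\cdot,1)^{\frac1p-\frac1q}T_tg\|_{L^q(d\mu_\nu)}\lesssim t^{-\frac{Q}{2}(\frac1p-\frac1q)}\|g\|_{L^p(d\mu_\nu)}$ plus Minkowski's inequality suffice. At the endpoint, the Hedberg splitting balances $s^{\alpha/2}M_{\mathrm{loc}}g(x)$ against $s^{(\alpha-Q/p)/2}V_\nu(x,1)^{-1/p}\|g\|_{L^p(d\mu_\nu)}$, and the powers of $V_\nu(x,1)$ then cancel exactly. Because $\mu_\nu$ is only locally doubling, the maximal function must be restricted to radii $\lesssim 1$, and the nonsingular tail $\rho\ge1$ of the local kernel must be handled like the global part. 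So you re-prove the quoted endpoint, but you avoid Proposition~\ref{prop:Relation_between_sobolev_norm_with_and_without_drift}, whose proof the paper omits.

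\textbf{Part (ii) and large times.} For (ii) and the large-time part of (i), factoring $T_t=T_1T_{t-1}$ and using $L^p$-contractivity lets you keep $c=1$. The paper instead bounds the whole kernel of $(G_\nu+cI)^{-\alpha/2}$. It must therefore take $c$ large, via Bessel asymptotics, so that $e^{-\sqrt{b'c}\,\rho}$ beats $e^{\nu\cdot(x'-y')(p'/p-1)}$.

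\textbf{Part (iii).} The paper passes through a scaling limit $R\to0$ to the Euclidean Laplacian with drift and invokes an extension of the Lie group result of Bruno et al. Your bump functions are much more elementary and self-contained. Your claim $\|f_a\|_{L^p_\alpha}\sim e^{2\nu\cdot a/p}$ holds only up to polynomial factors, since each $X_{j,k}$ costs $|a|$. No interpolation is needed to fix this: for $2m\ge\alpha$, Lemma~\ref{lem:Equivalence_of_norms_sobolev_space} and Remark~\ref{rem:boundedness-potential-operators}(i) give $\|f_a\|_{L^p_\alpha(d\mu_\nu)}\lesssim\|(G_\nu+I)^mf_a\|_{L^p(d\mu_\nu)}\lesssim(1+|a|)^{2m}e^{2\nu\cdot a/p}$. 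Together with $V_\nu(x,1)\gtrsim e^{2\nu\cdot x'}$ on the support, this is all you need as $s\to\pm\infty$.

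Your $x''$-dilated bump $\phi(x'-a,x''/|a|)$ buys something extra. It is adapted to $B((a,0),1)$, its support has measure $\sim|a|^{d_2}$, and all its $X^\gamma$-derivatives are bounded uniformly in $a$. By Proposition~\ref{prop:natural-power-norm-equi} it therefore gives $\|f_a\|_{L^p_\alpha}\lesssim|a|^{d_2/p}$ against $\|V_0(\cdot,1)^{\eta}f_a\|_{L^q}\gtrsim|a|^{d_2(\eta+1/q)}$, even when $\nu=0$. This yields $\eta\le\frac1p-\frac1q$, which is exactly the case left open in Remark~\ref{rem:appearance-ball-volume}.
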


\begin{remark}
\label{rem:appearance-ball-volume}
We have the following remarks on Theorem \ref{thm:embedding}. 
\begin{enumerate}
\item We could prove the necessary part (iii) in Theorem \ref{thm:embedding} only for $\nu \neq 0$. We strongly believe that in the case when $\nu = 0$, a similar necessary condition $\eta \leq \frac{1}{p} - \frac{1}{q}$ should hold true, but right now we do not know how to prove it. 

\medskip 
\item 
Note that in parts (i) and (ii) of Theorem \ref{thm:embedding}, we have terms of the form $V_{\nu}(\cdot, 1)^{\eta}$ in the left-hand side of the inequalities, and part (iii) establishes the explicit importance of such a term. In the context of Lie groups \cite[Theorem 1.1]{Sobolev_embedding_Lie_groups}, this type of embedding works with the introduction of appropriate powers of $\chi(x)$. In this setting, the ball volume factor $\mu_{\chi}(B(x, 1))$ is nothing but equivalent to $\chi(x)$. So, it gives a similar result. Moreover, part (iii) of the above theorem can be compared to the corresponding result in \cite{Sobolev_embedding_Lie_groups}, which asserts that for $p \neq q$, the embedding of the form $L^{p}_{\alpha}(d\mu_{\chi}) \hookrightarrow L^{q}(d\mu_{\chi})$ can not hold unless $\mu_{\chi}$ is the left Haar measure. 

\medskip 
While working with Grushin operator $G$ (without drift), in \cite[Theorem 2.6]{Bagchi-Basak-Garg-Ghosh-sparse-pseudo-multiplier-grushin-II-JGA-2024}, the authors also proved that a ball volume factor can be included in the Sobolev embedding. Note that since $|B(\cdot, 1)| \gtrsim 1$, the presence of this extra ball volume factor $|B(\cdot, 1)|^{\frac{1}{p}-\frac{1}{q}}$ leads to a stronger result than the usual embedding $L^{p}_{\alpha}(dx) \hookrightarrow L^{q}(dx)$. 
\end{enumerate}
\end{remark}

Analogous to Theorem \ref{thm:algebra_properties_group_setup}, we have the following algebra property of $L^p_{\alpha}(d\mu_\nu)$-spaces. 

\begin{theorem} \label{thm:algebra_properties}
Let $\alpha \geq 0,  \, p_1, q_2 \in (1, \infty]$ and $p, p_2, q_1 \in (1, \infty)$ be such that $\frac{1}{p} = \frac{1}{p_1}+\frac{1}{q_1} = \frac{1}{p_2}+\frac{1}{q_2}$. Then, 
\begin{align*}
\|fg\|_{L^p_\alpha(d\mu_\nu)} \lesssim \|f\|_{L^{p_1}(d\mu_\nu)} \|g\|_{L^{q_1}_{\alpha}(d\mu_\nu)} + \|f\|_{L^{p_2}_{\alpha}(d\mu_\nu)} \|g\|_{ L^{q_2}(d\mu_\nu)},    
\end{align*} 
for all $f \in L^{p_1}(d\mu_\nu) \cap L^{p_2}_{\alpha}(d\mu_\nu)$ and $g \in L^{q_1}_{\alpha}(d\mu_\nu) \cap L^{q_2}(d\mu_\nu)$. Consequently, $L^p_\alpha(d\mu_\nu) \cap L^\infty(d\mu_\nu)$ is an algebra for every $p \in (1, \infty).$
\end{theorem}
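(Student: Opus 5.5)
The plan follows the strategy of Coulhon--Russ--Tardivel-Nachef and Bruno et al.\ for Theorem \ref{thm:algebra_properties_group_setup}: characterize $L^p_\alpha(d\mu_\nu)$ by a heat-semigroup functional, and then estimate that functional on products by a Leibniz-type argument.

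\textbf{Step 1: reduction to $0<\alpha<1$.} For integer and larger $\alpha$, I will first show that $\|f\|_{L^p_\alpha(d\mu_\nu)}$ is equivalent to $\|f\|_{L^p(d\mu_\nu)}+\sum_{|I|\le k}\|X^I f\|_{L^p(d\mu_\nu)}$ when $\alpha=k$. This uses boundedness on $L^p(d\mu_\nu)$ of the inhomogeneous Riesz transforms $X_j(G_\nu+1)^{-1/2}$ and of their higher-order analogues. The $L^p$ boundedness of the first-order transforms is the Riesz transform result of \cite{Garg-Garg-Riesz-transform-Grushin-drift-2026}; the higher-order ones would be obtained by iteration and commutator identities. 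For integer $\alpha$, the Leibniz rule $X^I(fg)=\sum \binom{I}{J}X^Jf\,X^{I-J}g$ together with H\"older and Gagliardo--Nirenberg interpolation, $\|X^Jf\|_{L^{r}}\lesssim \|f\|_{L^{p_1}}^{1-\theta}\|f\|_{L^{p_2}_\alpha}^{\theta}$, then gives the estimate. The interpolation inequality comes from complex interpolation of $L^p_\alpha(d\mu_\nu)$, which holds because $G_\nu$ has bounded imaginary powers (spectral multiplier theorem). Writing $\alpha=k+s$ with $0\le s<1$, I then apply the fractional estimate to $X^J f\cdot X^{I-J}g$.

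\textbf{Step 2: fractional case $0<\alpha<1$.} I will prove the heat-kernel characterization
\[
\|f\|_{L^p_\alpha(d\mu_\nu)}\simeq \|f\|_{L^p(d\mu_\nu)}+\Big\|\Big(\int_0^1 \big(t^{-\alpha/2}\,S_t f\big)^2\,\tfrac{dt}{t}\Big)^{1/2}\Big\|_{L^p(d\mu_\nu)},
\qquad S_tf(x)=\Big(\frac{1}{V_\nu(x,t)}\int_{B(x,\sqrt t)}|f(y)-f(x)|^2\,d\mu_\nu(y)\Big)^{1/2}.
\]
Only local scales $t\le1$ appear, because the space is inhomogeneous. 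The inequality ``$\gtrsim$'' follows from Gaussian bounds for the heat kernel of $G_\nu$ relative to $\mu_\nu$ at times $t\le 1$, together with local doubling of $\mu_\nu$ on balls of radius $\le1$. The inequality ``$\lesssim$'' would come from Littlewood--Paley square functions $\big(\int_0^1|(tG_\nu)^{1-\alpha/2}e^{-tG_\nu}f|^2\,dt/t\big)^{1/2}$. Once this characterization is in hand, the pointwise bound $S_t(fg)\le |f|\,S_tg+ \big(\text{avg}|f(y)-f(x)|^2|g(y)|^2\big)^{1/2}$ and H\"older's inequality give the product estimate. The second term is handled by bounding $|g(y)|$ by a local maximal function of $g$, which is bounded on $L^{q_2}(d\mu_\nu)$ by local doubling, with $q_2=\infty$ trivial.

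\textbf{Main obstacle.} I expect the hardest step to be the square-function characterization in the presence of exponential volume growth. Since $\mu_\nu$ is only locally doubling, all arguments must be localized to $t\le 1$. The large-time part $e^{-G_\nu}$ is controlled using the spectral gap, or simply using boundedness of $(1+G_\nu)^{-\alpha/2}$. The local Gaussian estimates for the heat kernel of $G_\nu$ I would obtain by conjugation: $e^{\nu\cdot x'}G_\nu e^{-\nu\cdot x'}=G+|\nu|^2$, so the heat kernel is $e^{-t|\nu|^2}$ times the Grushin heat kernel, with explicit exponential weights. The algebra consequence then follows by taking $p_1=p_2=\infty$, $q_1=q_2=p$.
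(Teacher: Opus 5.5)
The paper writes no proof of this theorem. It only says that the argument of \cite{Sobolev_embedding_Lie_groups} carries over. The natural ingredients for that are Proposition~\ref{prop:natural-power-norm-equi}, Proposition~\ref{prop:recursive_characterisation} and interpolation; iterating Proposition~\ref{prop:recursive_characterisation} gives $\|h\|_{L^p_{k+s}(d\mu_\nu)}\sim\sum_{|I|\le k}\|X^I h\|_{L^p_s(d\mu_\nu)}$, which your Step~1 uses tacitly. Your two-step architecture is a reasonable adaptation of that outline.

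\textbf{Main gap.} The key estimate in Step~2 does not hold as you argue it. For $y\in B(x,\sqrt t)$, $|g(y)|$ is a point value, whereas a local maximal function $Mg(x)$ only controls averages of $|g|$ over balls containing $x$. What you would actually need is $\sup_{B(x,1)}|g|$, and this is not bounded on $L^{q_2}(d\mu_\nu)$ for $q_2<\infty$; it can be infinite everywhere. Hence your bound for $\big(V_\nu(x,t)^{-1}\int_{B(x,\sqrt t)}|f(y)-f(x)|^2|g(y)|^2\,d\mu_\nu(y)\big)^{1/2}$ only works when $q_2=\infty$, or, after exchanging the roles of $f$ and $g$, when $p_1=\infty$.

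This is not a harmless corner case. The general inequality with $p_1,q_2<\infty$ is exactly the missing case already for $0<\alpha<1$. Moreover, in Step~1 you apply the fractional estimate to $X^Jf\cdot X^{I-J}g$ with $0<|J|<|I|$, where neither factor is bounded. So even the algebra statement for non-integer $\alpha>2$ rests on the missing case.

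Handling a factor $g\in L^{q_2}$ with $q_2<\infty$ needs a genuinely additional argument. One option is H\"older's inequality inside the ball average, which produces $M_{\mathrm{loc}}(|g|^{\tau})^{1/\tau}$ with $\tau<q_2$. The price is $L^{\tau'}$-means of the differences of $f$, so you would also need a characterization of $L^{p_2}_\alpha(d\mu_\nu)$ by such means for some $\tau'$ compatible with $\tau<q_2$. Another option is a separate treatment of the cross term $|f(y)-f(x)|\,|g(y)-g(x)|$ via Gagliardo--Nirenberg. Nothing of this kind is in your plan.

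\textbf{Second problem: the $L^2$-means characterization.} Your characterization uses $L^2$-means, and it fails at small exponents. Near a point $x_0$ with $x_0'\neq0$ the operator $G_\nu$ is elliptic. If $\frac1p-\frac{\alpha}{d_1+d_2}>\frac12$, then $L^p_\alpha(d\mu_\nu)$ contains functions that are not locally square integrable, e.g.\ $|x-x_0|^{-\gamma}$ times a cut-off with $\frac{d_1+d_2}{2}\le\gamma<\frac{d_1+d_2}{p}-\alpha$. For such $f$, $S_tf(x)=\infty$ whenever $\rho(x,x_0)<\sqrt t$. Since $q_1$ and $p_2$ may be arbitrarily close to $1$, the bound $\|(\int_0^1(t^{-\alpha/2}S_t g)^2\,dt/t)^{1/2}\|_{L^{q_1}(d\mu_\nu)}\lesssim\|g\|_{L^{q_1}_\alpha(d\mu_\nu)}$ fails in part of the admissible range, and so does its analogue for $f$ at $p_2$. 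This is why the local functionals of Strichartz and \cite{Sobolev_algebras_on_Lie_groups_and_Riemannian_Manifold_Coulhon} are built from $L^1$-means.

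\textbf{Smaller points.}
\begin{enumerate}
\item The corollary requires $p_2<\infty$, so take $p_1=q_2=\infty$ and $q_1=p_2=p$, not $p_1=p_2=\infty$.
\item The Gagliardo--Nirenberg inequality with $\|f\|_{L^\infty}$, needed when $p_1=\infty$, does not follow from complex interpolation via imaginary powers, because these are not bounded on $L^\infty(d\mu_\nu)$. That endpoint needs a separate argument.
\end{enumerate}
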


It is not difficult to prove Theorem \ref{thm:algebra_properties} following the ideas of the proof of \cite[Theorem 1.2]{Sobolev_embedding_Lie_groups}, so we shall not write its proof. 

\medskip
Before delving into the proof of the embedding theorem (Theorem \ref{thm:embedding}) in Section \ref{sec:proof-embedding}, we study some important properties of these Sobolev spaces, such as various of their norm equivalences. We take it up in Section \ref{sec:properties-sobolev-spaces} and the same is done with the help of the boundedness properties of some local Riesz transforms associated with $G_\nu$. As a byproduct, we shall also prove the Sobolev space boundedness of the Riesz transforms of arbitrary order associated with $G_\nu$. Given a multi-index $0 \neq \gamma \in \left(\mathbb{N} \cup \{0\} \right)^{d_1+d_1d_2}$, consider the Riesz transform $R_\gamma$ defined by $R_\gamma = X^{\gamma} \, G_{\nu}^{-|\gamma|/2}$. For $1 < p < \infty$, the $L^{p}(d\mu_{\nu})$-boundedness of these Riesz transforms is well known in the literature in the case when $\nu = 0$, and the same was shown to hold true for any $\nu \neq 0$ in \cite{Garg-Garg-Riesz-transform-Grushin-drift-2026}. We shall prove the following result confirming their boundedness on Sobolev spaces $L^{p}_{\alpha}(d\mu_{\nu})$ as well.
\begin{theorem} 
\label{thm:Riesz-transform-boundedness-Sobolev}
Let $1 < p < \infty$ and $\alpha \geq 0$. For any multi-index $\gamma$, the Riesz transform $R_{\gamma} = X^{\gamma} \, G_{\nu}^{-|\gamma|/2}$ is bounded on $L^{p}_{\alpha}(d\mu_{\nu})$.    
\end{theorem}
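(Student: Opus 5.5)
We need to show $\|R_\gamma f\|_{L^p_\alpha(d\mu_\nu)} \lesssim \|f\|_{L^p_\alpha(d\mu_\nu)}$, which amounts to bounding $\|R_\gamma f\|_{L^p(d\mu_\nu)}$ and $\|G_\nu^{\alpha/2} R_\gamma f\|_{L^p(d\mu_\nu)}$. The first term is controlled by $\|f\|_{L^p(d\mu_\nu)}$, directly from the $L^p(d\mu_\nu)$-boundedness of $R_\gamma$ proved in \cite{Garg-Garg-Riesz-transform-Grushin-drift-2026}. So everything reduces to the second term. The plan is to use the norm equivalences of Section~\ref{sec:properties-sobolev-spaces} and move $G_\nu^{\alpha/2}$ past $X^\gamma$. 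The difficulty is that $G_\nu^{\alpha/2}$ does not commute with $X^\gamma$: the fields $X_j$ and $X_{j,k}$ have nonzero commutators $[X_j, X_{j,k}] = \partial_{x''_k}$, and $\partial_{x''_k}$ commutes with everything.

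\textbf{Step 1 (norm equivalence).} I would first establish a characterization in terms of local Riesz transforms. For a positive integer $m$,
$$
\|f\|_{L^p_m(d\mu_\nu)} \sim \sum_{|\beta|\le m} \|X^\beta f\|_{L^p(d\mu_\nu)}.
$$
This follows from $L^p$-boundedness of $X^\beta (c+G_\nu)^{-m/2}$ for $|\beta|\le m$ and some large $c>0$, together with the reverse inequality obtained by writing $(c+G_\nu)^{m/2}$ as a polynomial in $X$ for even $m$, or via $X^\beta (c+G_\nu)^{-|\beta|/2}$ bounds and interpolation otherwise. I would also record that $\|f\|_{L^p_\alpha} \sim \|(c+G_\nu)^{\alpha/2} f\|_{L^p}$. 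This holds because $(c+G_\nu)^{\alpha/2}(I+G_\nu^{\alpha/2})^{-1}$ and its inverse are bounded on $L^p(d\mu_\nu)$; I expect the spectral multiplier theory for $G_\nu$ (Mihlin--H\"ormander type, on a spectral gap or with the local/global split used in \cite{Garg-Garg-Riesz-transform-Grushin-drift-2026}) to provide this.

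\textbf{Step 2 (integer $\alpha$).} For $\alpha = m$ an integer, by Step 1 it suffices to bound $\|X^\beta X^\gamma G_\nu^{-|\gamma|/2} f\|_{L^p}$ for $|\beta|\le m$. Write
$$
X^\beta X^\gamma G_\nu^{-|\gamma|/2} f = \bigl[X^\beta X^\gamma G_\nu^{-(|\beta|+|\gamma|)/2}\bigr]\, G_\nu^{|\beta|/2} f .
$$
The operator in brackets is a Riesz transform of order $|\beta|+|\gamma|$ (a product of $X$'s, so $X^{\beta+\gamma}$ after relabelling), and it is $L^p(d\mu_\nu)$-bounded by \cite{Garg-Garg-Riesz-transform-Grushin-drift-2026}. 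Moreover $\|G_\nu^{|\beta|/2} f\|_{L^p} \lesssim \|f\|_{L^p_m}$ for $|\beta|\le m$, by the multiplier $\lambda^{|\beta|/2}(1+\lambda^{m/2})^{-1}$. One point needs care: $G_\nu^{-|\gamma|/2}$ must be well-defined. Either a spectral gap is available when $\nu\ne0$, or for $\nu=0$ we interpret $R_\gamma$ as the bounded extension from a dense class. Since everything here is spectral calculus, commutators never arise.

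\textbf{Step 3 (general $\alpha$).} The cleanest route is to avoid commutation altogether and use the same factorization for fractional orders: for $\alpha\ge 0$, write $G_\nu^{\alpha/2}X^\gamma G_\nu^{-|\gamma|/2}$. Here $G_\nu^{\alpha/2}$ sits on the left of $X^\gamma$ and cannot be absorbed. I would therefore instead interpolate, using complex interpolation of the family $L^p_\alpha$. Since $\|f\|_{L^p_\alpha}\sim\|(c+G_\nu)^{\alpha/2}f\|_{L^p}$, these spaces form a complex interpolation scale, by boundedness of the imaginary powers $(c+G_\nu)^{is}$ with polynomial growth in $s$, again from the multiplier theorem. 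Boundedness of $R_\gamma$ on $L^p_0$ and on $L^p_m$ for all integers $m$ then gives boundedness on every $L^p_\alpha$.

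The main obstacle I anticipate is Step 1, specifically the lower bound $\|G_\nu^{m/2}f\|_{L^p}\lesssim\sum_{|\beta|\le m}\|X^\beta f\|_{L^p}$ for odd $m$, together with the imaginary-power estimates in the drift case $\nu\neq0$, where the measure has exponential growth. For the former I would first try $G_\nu^{1/2} f = \sum_i (G_\nu^{-1/2}X_i^{*})X_i f$, with $X_i^*$ the $L^2(d\mu_\nu)$-adjoint (which is $-X_i-2\nu_i$ or $-X_{j,k}$). This reduces the bound to $L^{p'}$-boundedness of the first-order Riesz transforms and their duals, with the drift terms absorbed by the lower-order $\|f\|_{L^p}$ and $\|X f\|_{L^p}$ terms.
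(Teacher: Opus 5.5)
Your argument is correct, and it rests on the same identity the paper uses: the extra vector fields are absorbed into a Riesz transform of higher order,
\[
X^\beta X^\gamma G_\nu^{-|\gamma|/2} f=\bigl[X^\beta X^\gamma G_\nu^{-(|\beta|+|\gamma|)/2}\bigr]\,G_\nu^{|\beta|/2}f ,
\]
followed by complex interpolation in $\alpha$. Only the bookkeeping differs.

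The paper inducts from $\alpha$ to $\alpha+1$. It applies the recursive characterisation (Proposition~\ref{prop:recursive_characterisation}) to $R_\gamma f$, uses the identity only with $|\beta|=1$, and feeds in the inductive hypothesis that all Riesz transforms are bounded on $L^p_\alpha(d\mu_\nu)$. You instead treat each integer level $m$ in one stroke via Proposition~\ref{prop:natural-power-norm-equi}. You therefore need only the $L^p(d\mu_\nu)$-bounds for Riesz transforms of order $|\beta|+|\gamma|$, plus the embedding $\|G_\nu^{|\beta|/2}f\|_{L^p(d\mu_\nu)}\lesssim\|f\|_{L^p_m(d\mu_\nu)}$. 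This bypasses Lemma~\ref{lem:boundedness_of_local_Riesz_transform_on_sobolev_spaces-higher-order} and Proposition~\ref{prop:recursive_characterisation} entirely and is the more economical route. The paper's induction step, for its part, works at every real $\alpha$, so interpolation would only be needed on $[0,1]$.

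One imprecision: $X^\beta X^\gamma$ is not $X^{\beta+\gamma}$ ``after relabelling'', because $[X_j,X_{j,k}]=\partial_{x''_k}\neq 0$. It is a word of length $|\beta|+|\gamma|$ in the fields, so you need the Riesz transform bounds for arbitrary words. That is also how the paper uses them.

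What you should not rely on is a Mihlin--H\"ormander calculus for $G_\nu$: for $\nu\neq0$ and $p\neq2$ there is none. Conjugating by $e^{2\nu\cdot x'/p}$ turns $G_\nu$ on $L^p(d\mu_\nu)$ into $G-2\bigl(1-\frac{2}{p}\bigr)\nu\cdot\nabla_{x'}+\frac{4|\nu|^2}{pp'}$ on $L^p(dx)$. Testing on wave packets $e^{2\pi i\xi\cdot x'}\phi(x'/R)\psi(x''/R^2)$ with $R\to\infty$ shows that its spectrum contains the non-real points $\frac{4|\nu|^2}{pp'}+4\pi^2|\xi|^2-4\pi i\bigl(1-\frac{2}{p}\bigr)\nu\cdot\xi$, $\xi\in\R^{d_1}$. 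Two consequences follow:
\begin{itemize}
\item $\|(c+G_\nu)^{is}\|$ grows exponentially, not polynomially, in $s$.
\item For suitable parameters (e.g.\ $\nu=e_1$, $m=8$, and some $p\in(1,2)$) the point $-1$ lies in the spectrum of $G_\nu^{m/2}$. Your multipliers $\lambda^{|\beta|/2}(1+\lambda^{m/2})^{-1}$ and $(c+\lambda)^{\alpha/2}(1+\lambda^{\alpha/2})^{-1}$ then do not even give bounded operators.
\end{itemize}

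The inequalities you actually need are nevertheless true and are already in the paper, proved by subordination and duality:
\begin{itemize}
\item the norm equivalence is Lemma~\ref{lem:Equivalence_of_norms_sobolev_space};
\item the bound on $G_\nu^{|\beta|/2}f$ is Remark~\ref{rem:boundedness-potential-operators}(i);
\item the interpolation scale is Lemma~\ref{lem:interpolation}, which only requires the imaginary powers to be bounded (exponential growth in $s$ is harmless).
\end{itemize}
Citing these in place of the multiplier theorem makes your proof complete.
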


In the next section, we shall begin with recalling some basic information concerning the operator $G_\nu$, including properties of the heat kernel as well as Riesz transforms. As mentioned earlier, the main results will be proved in Sections \ref{sec:properties-sobolev-spaces} and \ref{sec:proof-embedding}.  


\section{Preliminaries} 
\label{sec:prelim}

As mentioned in the introduction, given a vector $\nu \in \R^{d_1}$, the Grushin operator with drift $G_{\nu}$ is defined on $\R^{d_1+d_2}$ by 
\begin{equation} \label{def:Grushin_with_drift}
G_\nu = -\sum_{j=1}^{d_1} X_j^2 - \sum_{j=1}^{d_1} \sum_{k=1}^{d_2} X_{j,k}^2 - 2 \, \sum_{j=1}^{d_1} \nu_j X_j, 
\end{equation}
where 
$X_{j} = \frac{\partial}{\partial x'_{j}}$ and $X_{j,k} = x'_{j} \frac{\partial}{\partial x''_{k}},$ for $1 \leq j \leq d_1$ and $1 \leq k \leq d_2$. 

\medskip 
The operator $G_\nu$ is positive-definite and essentially self-adjoint on $L^2(\R^{d_1+d_2}, d\mu_\nu)$, where $d\mu_\nu = e^{2\nu\cdot x'} \, dx$, and for $\nu = 0$, the operator $G_\nu$ reduces to the standard Grushin operator $G_0 = G$ with the measure $d\mu_0 = dx$, the Lebesgue measure on $\R^{d_1+d_2}$. 

\medskip 
The control distance $\rho(x,y)$ for the Grushin operator $G$ is known to have the following asymptotic behavior (see \cite{Analysis_of_degenerate_elliptic_opertators_Robinson}): 
\begin{equation} \label{Grushin_distance}
\rho(x,y) \sim  |x'-y'| + \left\{
\begin{array}{ll}
\frac{|x''-y''|}{|x'|+|y'|}, & \mbox{if } |x''-y''|^{\frac{1}{2}} \leq |x'|+|y'| \\
|x''-y''|^{\frac{1}{2}},& \mbox{if } |x''-y''|^{\frac{1}{2}} \geq |x'|+|y'|.
\end{array} \right.    
\end{equation} 
For convenience, we shall refer the right-hand side of \eqref{Grushin_distance} as the Grushin metric and will denote it by $\rho$ itself.

\medskip 
If $B(x,r) = \, \{y \in \R^{d_1+d_2}: \rho(x,y)<r\}$ denotes the open ball with center $x$ and radius $r>0$, then it is well known from \cite{Analysis_of_degenerate_elliptic_opertators_Robinson} that 
\begin{equation} \label{eq:ball_volume}
|B(x,r)| \sim  r^{d_1+d_2} \, \text{max}\{r,|x'|\}^{d_2} \sim r^{d_1+d_2}(r+|x'|)^{d_2}, 
\end{equation}
where $|\cdot|$ denotes the Lebesgue measure on $\R^{d_1+d_2}$. It follows from \eqref{eq:ball_volume} that the Lebesgue measure on the metric space $(\R^{d_1+d_2}, \, \rho)$ satisfies the doubling property with the doubling constant $Q = d_1+ 2d_2$. 

\medskip 
On the other hand, when $\nu \neq 0$, it was shown in \cite[Lemma 2.4]{Garg-Garg-Riesz-transform-Grushin-drift-2026} that the measure $d\mu_{\nu}$ is of exponential volume growth. More precisely, if we denote the ball volume of the open ball $B(x, \sqrt{r})$ in this measure by $V_\nu(x,r)$, then $V_\nu$ satisfies the following asymptotics: 
\begin{align}
\label{main:ball-vol-est}
V_\nu(x,r) \sim 
\left\{
\begin{array}{ll}
e^{2\nu \cdot x'} \, r^{(d_1+d_2)/2} \, (\sqrt{r}+|x'|)^{d_2}, & \mbox{if } \sqrt{r} \leq 1/|\nu| \\
|\nu|^{-\frac{(d_1+1)}{2} - d_2} \, e^{2(\nu \cdot x'+|\nu|\sqrt{r})} \, r^\frac{d_1-1}{4} \, (\sqrt{r}+|x'|)^{d_2}, & \mbox{if } \sqrt{r} > 1/|\nu|.
\end{array} \right.  
\end{align}

\medskip 
For any $\nu \in \R^{d_1}$, the operator $G_\nu$ generates a symmetric diffusion heat semigroup $(e^{-t G_\nu})_{t>0}$ on $L^2(d\mu_\nu)$, which is given by 
$$
e^{-tG_{\nu}} f(x) = \int_{\R^{d_1+d_2}} H_{t, \nu}(x,y) \, f(y) \, d\mu_{\nu}(y),
$$
and the heat kernel $H_{t, \nu}$ for a general $\nu$ is given in terms of the heat kernel $H_t = H_{t, 0}$ for $\nu = 0$ in the following explicit manner: 
$$
H_{t, \nu}(x,y) = e^{-|\nu|^2 t} \, e^{-\nu \cdot (x'+ y')} \, H_{t}(x,y). 
$$ 
It is well known that the heat kernel $H_{t}$ of the Grushin operator $G$ satisfies the Gaussian upper and lower bounds, that is, there exist constants $b', \, b'' > 0$ such that 
$$
|B(x,\sqrt{t})|^{-1} \, e^{-b'' \rho(x,y)^2 / t} \lesssim H_{t}(x,y) \, \lesssim |B(x,\sqrt{t})|^{-1} \, e^{-b' \rho(x,y)^2 / t} .
$$
By Hille-Yosida theorem (see \cite[II.3.5]{One-parameter_semigroups_for_linear_evolution_equations_2000}), the operator $G_\nu$ is densely defined, closed, its resolvent contains $(-\infty,0)$ and for all $t>0$, 
\begin{align*} 
\|t \, (tI + G_\nu)^{-1}\|_{L^p(d\mu_\nu) \to L^p(d\mu_\nu)} \leq 1.
\end{align*} 
Thus, we can define fractional powers $G_\nu^\alpha$, for any $\alpha \in \C$. These operators are densely defined and satisfy $G_\nu^{\alpha + \beta} = G_\nu^{\alpha} \, G_\nu^{\beta}$ for all $\alpha, \beta \in \C$ (see, for example, \cite{Functional_analysis_and_semi_groups_Hille, Fractional_powers_of_operators}).

\begin{remark} \label{rem:1}
Let $1 \leq p \leq \infty$ and $\alpha > 0$. Given any $c > 0$, it follows from the contraction property of the heat semigroup $(e^{-t G_\nu})_{t>0}$ and the functional calculus identity
\begin{align} 
\label{integral-identity-negative-fractional-power}
(G_{\nu}+cI)^{-\alpha} = \frac{1}{\Gamma(\alpha)} \int_{0}^{\infty} t^{\alpha-1} \, e^{-c \, t} \, e^{-tG_{\nu}} \, dt, 
\end{align}
that the operator $c^\alpha (G_{\nu}+cI)^{-\alpha}$ is bounded on $L^{p}(d\mu_{\nu})$ uniformly in $c > 0$.

\medskip 
It immediately follows from the above that the operator $G_\nu^{\alpha} (G_\nu + cI)^{-\alpha}$ is bounded on $L^{p}(d\mu_{\nu})$ uniformly in $c>0$ for any $\alpha \in \mathbb{N}$. In fact, $G_\nu^{\alpha} (G_\nu + cI)^{-\alpha}$ is bounded on $L^{p}(d\mu_{\nu})$ uniformly in $c>0$ for every $\alpha > 0$. Clearly, we only need to verify it for $\alpha \in (0,1)$, and the same can be seen, for example, via the following identity: 
$$
G_\nu^{\alpha} (G_\nu + cI)^{-\alpha} = C_{\alpha}^{-1} \int_{0}^{\infty} s^{-1+\alpha} \, (1+s)^{-1} \, G_{\nu} \left( G_{\nu} + s (1+s)^{-1} c I \right)^{-1}  \, ds,
$$
where $C_{\alpha} = \int_{0}^{\infty} s^{-1+\alpha} \, (1+s)^{-1} \, ds$.
\end{remark}

\begin{remark} \label{rem:2}
For any multi-index $0 \neq \gamma \in \left(\mathbb{N} \cup \{0\} \right)^{d_1+d_1d_2}$, let $R_\gamma = X^{\gamma} \, G_{\nu}^{-|\gamma|/2} $ denote the Riesz transform associated with $G_\nu$. For $1 < p < \infty$, the $L^{p}(d\mu_{\nu})$-boundedness of these Riesz transforms was already known to be true when $\nu = 0$, and the same was shown to hold true for any $\nu \neq 0$ too in \cite{Garg-Garg-Riesz-transform-Grushin-drift-2026}. In fact, the $L^p(d\mu_{\chi})$-boundedness of Riesz transforms of arbitrary order associated with the sub-Laplacian with drift on certain amenable groups is due to Lohou\'{e}--Mustapha \cite{Lohoue-Mustapha-drift-2004}, from which the analogous result for $R_\gamma = X^{\gamma} \, G_{\nu}^{-|\gamma|/2} $ associated with $G_\nu$ was established in \cite{Garg-Garg-Riesz-transform-Grushin-drift-2026} by adapting the transference techniques from \cite{Transference_methods_in_analysis, Riesz_transform_Robinson}. 

\medskip 
Coming back to $G_{\nu}$, let $1< p< \infty$ and $\gamma, \, \nu$ be fixed. For any $c >0$, consider the local Riesz transforms $R_{\gamma, \, \nu, \, c}$, defined by 
\begin{align} 
\label{local-Riesz} 
R_{\gamma, \nu, c} := X^{\gamma} (G_\nu + cI)^{-|\gamma|/2}.
\end{align} 
Since $R_{\gamma, \nu, c} = R_{\gamma} \, G_\nu^{|\gamma|/2} (G_\nu + cI)^{-|\gamma|/2}$, it follows from the $L^{p}(d\mu_{\nu})$-boundedness of the Riesz transforms $R_{\gamma}$ and also of the operators $G_\nu^{|\gamma|/2} (G_\nu + cI)^{-|\gamma|/2}$ (see Remark \ref{rem:1}) that all the local Riesz transforms $R_{\gamma, \nu, c}$ are bounded on $L^{p}(d\mu_{\nu})$ uniformly in $c>0$.
\end{remark} 


\section{Sobolev Spaces} 
\label{sec:properties-sobolev-spaces}
As mentioned in the introduction, for $1<p<\infty$ and $\alpha \geq 0$, we define the Sobolev space 
$$
L^p_\alpha(d\mu_{\nu}) := \{f \in L^p(d\mu_\nu): G_{\nu}^{\alpha/2}f \in L^p(d\mu_\nu)\},
$$
endowed with the norm 
\begin{align*} 
\|f\|_{L^p_\alpha(d\mu_{\nu})} := \|f\|_{L^p(d\mu_\nu)}+\|G_{\nu}^{\alpha/2}f\|_{L^p(d\mu_\nu)}.
\end{align*} 
Note that for $\alpha = 0$, $L^{p}_{\alpha}(d\mu_{\nu})$ is nothing but $L^{p}(d\mu_{\nu})$. 

\medskip 
By using $L^p(d\mu_\nu)$-boundedness of operators $(G_\nu+cI)^{-\alpha/2}$ and $G_\nu^{\alpha/2} (G_\nu+cI)^{-\alpha/2}$ (see Remark \ref{rem:1}), one can show the following equivalent condition for the norm on $L^{p}_{\alpha}(d\mu_{\nu})$. 

\begin{lemma} 
\label{lem:Equivalence_of_norms_sobolev_space} 
Let $1<p<\infty$ and $\alpha > 0$. Given any $c>0$, we have 
\begin{equation} \label{eq:Equivalence_of_norms_sobolev_space}
\|f\|_{L^p_\alpha(d\mu_{\nu})}\sim \|(G_\nu+cI)^{\alpha/2} f\|_{L^p(d\mu_\nu)}.
\end{equation}
\end{lemma}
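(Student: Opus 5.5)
We prove the two inequalities in \eqref{eq:Equivalence_of_norms_sobolev_space} separately. Both follow from the uniform $L^p(d\mu_\nu)$-boundedness of $c^{\alpha/2}(G_\nu+cI)^{-\alpha/2}$ and of $G_\nu^{\alpha/2}(G_\nu+cI)^{-\alpha/2}$ recorded in Remark \ref{rem:1}, together with the composition law for fractional powers. All identities below are first checked on a dense core, such as the range of the heat semigroup, or on $f$ in the domain of $G_\nu^{\alpha/2}$, and then extended by closedness of the fractional powers. The implicit constants may depend on $c$.

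\emph{Upper bound for $\|f\|_{L^p_\alpha(d\mu_\nu)}$.} Put $g = (G_\nu+cI)^{\alpha/2} f \in L^p(d\mu_\nu)$. Then
\[
f = (G_\nu+cI)^{-\alpha/2} g \quad \text{and} \quad G_\nu^{\alpha/2} f = G_\nu^{\alpha/2}(G_\nu+cI)^{-\alpha/2} g .
\]
The first identity and Remark \ref{rem:1} give $\|f\|_{L^p(d\mu_\nu)} \le c^{-\alpha/2} C \|g\|_{L^p(d\mu_\nu)}$. The second gives $\|G_\nu^{\alpha/2}f\|_{L^p(d\mu_\nu)} \le C\|g\|_{L^p(d\mu_\nu)}$. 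Adding the two estimates yields $\|f\|_{L^p_\alpha(d\mu_\nu)} \lesssim \|(G_\nu+cI)^{\alpha/2}f\|_{L^p(d\mu_\nu)}$.

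\emph{Lower bound for $\|f\|_{L^p_\alpha(d\mu_\nu)}$.} Here we must show $\|(G_\nu+cI)^{\alpha/2}f\|_{L^p} \lesssim \|f\|_{L^p}+\|G_\nu^{\alpha/2}f\|_{L^p}$. The plan is to write
\[
(G_\nu+cI)^{\alpha/2} = T_\alpha\,(I+G_\nu^{\alpha/2}), \qquad T_\alpha := (G_\nu+cI)^{\alpha/2}(I+G_\nu^{\alpha/2})^{-1},
\]
and prove that $T_\alpha$ is bounded on $L^p(d\mu_\nu)$.

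\emph{Case $\alpha/2 = m \in \mathbb{N}$.} Expand $(G_\nu+cI)^m$ binomially. Each intermediate power satisfies $\|G_\nu^{k}f\|_{L^p} \lesssim \|f\|_{L^p} + \|G_\nu^{m}f\|_{L^p}$ for $0<k<m$, by the moment (interpolation) inequality for sectorial operators of Komatsu type. That inequality follows from the same resolvent bound $\|t(tI+G_\nu)^{-1}\|\le 1$ quoted from Hille--Yosida.

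\emph{General $\alpha$.} The cleanest route is to reduce to the sectorial interpolation inequality
\[
\|G_\nu^{\beta}f\|_{L^p} \lesssim \|f\|_{L^p}^{1-\beta/\gamma}\,\|G_\nu^{\gamma}f\|_{L^p}^{\beta/\gamma}, \qquad 0<\beta<\gamma,
\]
which again holds under the resolvent bound. Alternatively, one can use the identity $(G_\nu+cI)^{\alpha/2} = (G_\nu+cI)^{\alpha/2}(G_\nu+I)^{-\alpha/2}\,(G_\nu+I)^{\alpha/2}$ and treat the first factor as a bounded operator. Its boundedness follows from a Balakrishnan-type integral, analogous to the identity in Remark \ref{rem:1}, applied to $(G_\nu+cI)(G_\nu+I)^{-1} = I + (c-1)(G_\nu+I)^{-1}$, whose resolvent bounds are uniform.

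This reduces the general case to $c=1$ and to bounding $\|(G_\nu+I)^{\alpha/2}f\|_{L^p}$ by $\|f\|_{L^p}+\|G_\nu^{\alpha/2}f\|_{L^p}$. That bound is the main obstacle. I would handle it via the Balakrishnan representation of $(G_\nu+I)^{\alpha/2}-G_\nu^{\alpha/2}$ for $0<\alpha/2<1$, namely an integral $\int_0^\infty s^{\alpha/2-1}[\ldots]\,ds$ of resolvent differences. The plan is to show this integral operator is bounded on $L^p(d\mu_\nu)$, using $\|(sI+G_\nu)^{-1}\|\le s^{-1}$ at large $s$ and boundedness of $G_\nu(sI+G_\nu)^{-1}$ at small $s$. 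For $\alpha/2\ge1$, write $\alpha/2 = m+\theta$ and factor off integer powers, which are treated by the integer case above.
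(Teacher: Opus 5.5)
Your proposal is correct, and your upper bound is the same as the paper's. For the reverse inequality you take a genuinely different route.

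\textbf{The paper's route.} The paper dualizes against $g\in L^{p'}(d\mu_\nu)$ and picks $k$ with $\alpha\in(2k,2k+2]$. It writes $(G_\nu+cI)^{\alpha/2}=(G_\nu+cI)^{k+1}(G_\nu+cI)^{-(k+1-\alpha/2)}$ and expands integer powers binomially twice. Using the symmetry of $G_\nu$ with respect to $\mu_\nu$, it then moves each resulting $G_\nu^{l}$ either entirely onto $g$ (when $l\le k+1$) or leaves exactly $G_\nu^{\alpha/2}$ on $f$ (when $l\ge k+2$). The only operators that then need bounds are $G_\nu^{\beta}(G_\nu+cI)^{-\gamma}$ with $0\le\beta\le\gamma$, which Remark~\ref{rem:1} already covers.

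\textbf{Your route.} You stay in $L^p(d\mu_\nu)$ and import two standard facts about non-negative operators in Komatsu's sense. The first is the moment inequality. The second is the boundedness of $(G_\nu+cI)^{\theta}-G_\nu^{\theta}$ for $0<\theta<1$, via Balakrishnan's formula. There the integrand is $c\,s^{\theta}(sI+G_\nu)^{-1}(sI+cI+G_\nu)^{-1}$, of norm at most $c\,s^{\theta-1}(s+c)^{-1}$, which is integrable on $(0,\infty)$ and gives a bound $O(c^{\theta})$.

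\textbf{What each buys.} Your argument needs no duality or symmetry and works for any operator satisfying the Hille--Yosida resolvent bound. It is also cleaner on domains: it directly yields $D(G_\nu^{\alpha/2})=D((G_\nu+cI)^{\alpha/2})$. The paper's expansion, by contrast, formally applies $G_\nu^{l}$ with $l$ up to $2k+2$ to an $f$ that is only in $D(G_\nu^{\alpha/2})$, so it has to be read weakly or on a core. The paper's route, in return, is self-contained given Remark~\ref{rem:1} and avoids citing fractional-power theory.

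\textbf{Three small points.}
\begin{enumerate}
\item The factorization through $T_\alpha$ is never used. It also presupposes that $-1$ lies in the resolvent set of $G_\nu^{\alpha/2}$ on $L^p(d\mu_\nu)$, which you do not justify. Just drop it.
\item In the step $\alpha/2=m+\theta$, what you actually need is not the integer case. You need the moment inequality with the non-integer top exponent $m+\theta$, to control $\|G_\nu^{k}f\|$ and $\|G_\nu^{k+\theta}f\|$, $0\le k\le m$, by $\|f\|+\|G_\nu^{m+\theta}f\|$. These are the terms that appear after writing $(G_\nu+I)^{m+\theta}f=(G_\nu+I)^{\theta}(G_\nu+I)^{m}f$.
\item The reduction to $c=1$ is unnecessary. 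The Balakrishnan estimate above works directly for every $c>0$, so you can skip the product-of-fractional-powers identity behind $(G_\nu+cI)^{\alpha/2}(G_\nu+I)^{-\alpha/2}$.
\end{enumerate}
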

\begin{proof}
Observe first that 
\begin{align*}
\left\| f \right\|_{L^p_\alpha(d\mu_{\nu})} 
&= \|f\|_{L^p(d\mu_\nu)}+\|G_{\nu}^{\alpha/2}f\|_{L^p(d\mu_\nu)} \\ 
&= \left\| (G_\nu+cI)^{-\alpha/2} \left( (G_\nu+cI)^{\alpha/2} f \right) \right\|_{L^p(d\mu_\nu)} \\ 
&\quad + \left\| \left( G_{\nu}^{\alpha/2} (G_\nu+cI)^{-\alpha/2} \right) \left( (G_\nu+cI)^{\alpha/2} f \right) \right\|_{L^p(d\mu_\nu)} \\ 
&\lesssim \left\| (G_\nu+cI)^{\alpha/2} f \right\|_{L^p(d\mu_\nu)}.
\end{align*}

Next, we prove the reverse inequality. For $\alpha > 0$, there is a $k \in \mathbb{N}\cup \{0\}$ such that $\alpha \in (2k, 2k+2]$. Now, given any $g \in L^{p'}(d\mu_\nu)$, we can write 
\begin{align*}
\left| \left( (G_\nu+cI)^{\alpha/2} f, \, g \right) \right| & = \left| \left( (G_\nu+cI)^{k+1} f, \, (G_{\nu} + cI)^{-\left(k+1-\frac{\alpha}{2}\right)}g \right) \right| \\
& \lesssim \sum_{j=0}^{k+1} \left| \left(G_\nu^j f, \, (G_{\nu} + cI)^{-\left(k+1-\frac{\alpha}{2}\right)}g \right) \right|, 
\end{align*}
and therefore, we will be done if we can show that 
\begin{align*}
&\left| \left( G_\nu^j f, \, (G_{\nu} + cI)^{-\left(k+1-\frac{\alpha}{2}\right)}g \right) \right| \lesssim \left\| f \right\|_{L^p_\alpha(d\mu_{\nu})} \, \|g\|_{L^{p'}(d\mu_{\nu})}, 
\end{align*}
for every $0 \leq j \leq k+1$ and $g \in L^{p'}(d\mu_\nu)$, and let us now prove it as follows: 
\begin{align*}
&\left| \left( G_\nu^j f, \, (G_{\nu} + cI)^{-\left(k+1-\frac{\alpha}{2}\right)}g \right) \right| \\
& = \left| \left( G_\nu^j (G_{\nu} + cI)^{k+1} f, \, (G_{\nu} + cI)^{-\left(2k+2-\frac{\alpha}{2}\right)} g \right) \right| \\ 
&\lesssim \sum_{l=j}^{j+k+1} \left| \left( G_\nu^l f, \, (G_{\nu} + cI)^{-\left(2k+2-\frac{\alpha}{2}\right)} g \right) \right| \\ 
&= \sum_{l=j}^{k+1} \left| \left( f, \, G_\nu^l  (G_{\nu} + cI)^{-\left(2k+2-\frac{\alpha}{2}\right)} g \right) \right| + \sum_{l=k+2}^{j+k+1} \left| \left( G_\nu^{\alpha/2} f, \, G_\nu^{l-\frac{\alpha}{2}} (G_{\nu} + cI)^{-\left(2k+2-\frac{\alpha}{2}\right)} g \right) \right| \\
&\leq \sum_{l=j}^{k+1} \left\| f \right\|_{L^p(d\mu_\nu)} \left\| G_\nu^l (G_{\nu} + cI)^{-\left(2k+2-\frac{\alpha}{2}\right)} g \right\|_{L^{p'}(d\mu_\nu)} \\ 
&\quad + \sum_{l=k+2}^{j+k+1} \left\| G_\nu^{\alpha/2} f \right\|_{L^p(d\mu_\nu)} \left\| G_\nu^{l-\frac{\alpha}{2}} (G_{\nu} + cI)^{-\left(2k+2-\frac{\alpha}{2}\right)} g \right\|_{L^{p'}(d\mu_\nu)} \\
& \lesssim \left\| f \right\|_{L^p(d\mu_\nu)} \, \|g\|_{L^{p'}(d\mu_{\nu})} 
+ \left\| G_\nu^{\alpha/2} f \right\|_{L^p(d\mu_\nu)} \, \|g\|_{L^{p'}(d\mu_{\nu})} \\ 
& = \left\| f \right\|_{L^p_\alpha(d\mu_{\nu})} \, \|g\|_{L^{p'}(d\mu_{\nu})}, 
\end{align*}
and this completes the proof of Lemma \ref{lem:Equivalence_of_norms_sobolev_space}. 
\end{proof}

\begin{remark} 
\label{rem:boundedness-potential-operators}
As an immediate consequence of Remark \ref{rem:1} and Lemma \ref{lem:Equivalence_of_norms_sobolev_space}, we get the following results. 
\begin{enumerate}[(i)]
\item Sobolev spaces satisfy the natural embedding 
\begin{align*}
L^{p}_{\alpha_2}(d\mu_{\nu}) \hookrightarrow L^{p}_{\alpha_1}(d\mu_{\nu}) 
\end{align*} 
for any $0 \leq \alpha_1 < \alpha_2$.

\medskip 
\item 
We had seen in Remark \ref{rem:1} that given any $\alpha > 0$, the operators $(G_{\nu}+cI)^{-\alpha}$ and $G_{\nu}^{\alpha} (G_{\nu}+cI)^{-\alpha}$ are bounded on $L^{p}(d\mu_{\nu})$-spaces. It turns out that these operators are also bounded on Sobolev spaces $L^{p}_{\beta}(d\mu_{\nu})$ for all $\beta > 0$. In fact,
\begin{align*}
\|(G_{\nu}+cI)^{-\alpha}f\|_{L^{p}_{\beta}(d\mu_{\nu})} 
&\sim \|(G_{\nu}+cI)^{\beta/2}(G_{\nu}+cI)^{-\alpha}f\|_{L^{p}(d\mu_{\nu})} \\
&= \|(G_{\nu}+cI)^{-\alpha}(G_{\nu}+cI)^{\beta/2}f\|_{L^{p}(d\mu_{\nu})} \\
&\lesssim \|(G_{\nu}+cI)^{\beta/2} f\|_{L^{p}(d\mu_{\nu})} 
\\
&\sim \|f\|_{L^{p}_{\beta}(d\mu_{\nu})}. 
\end{align*}
In a similar manner, one can verify the $L^{p}_{\beta}(d\mu_{\nu})$-boundedness of $G_{\nu}^{\alpha}(G_{\nu}+cI)^{-\alpha}$. 
\end{enumerate}
\end{remark} 

These Sobolev spaces respect the following complex interpolation property. 
\begin{lemma} \label{lem:interpolation}
Let $1 < p < \infty$ and $\alpha_1, \, \alpha_2 \geq 0$. For any $\theta \in (0,1)$, we have
$$
(L^p_{\alpha_1}(d\mu_\nu), L^p_{\alpha_2}(d\mu_\nu))_{[\theta]} = L^p_{\alpha}(d\mu_\nu),
$$
where $\alpha = \theta \, \alpha_1 + (1-\theta) \, \alpha_2.$
\end{lemma}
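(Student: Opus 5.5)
By Lemma \ref{lem:Equivalence_of_norms_sobolev_space} (with, say, $c=1$), $L^p_\alpha(d\mu_\nu)$ coincides with the image $(G_\nu+I)^{-\alpha/2}L^p(d\mu_\nu)$, with equivalent norm $\|(G_\nu+I)^{\alpha/2}f\|_{L^p(d\mu_\nu)}$. Thus $(G_\nu+I)^{-\alpha/2}$ is an isomorphism of $L^p(d\mu_\nu)$ onto $L^p_\alpha(d\mu_\nu)$. The plan is to reduce the claim to the standard interpolation theorem for domains of complex powers of an operator with bounded imaginary powers. That theorem states that if $A$ is sectorial on a Banach space $E$ and has bounded imaginary powers, then $[D(A^{a}),D(A^{b})]_\theta = D(A^{\theta a+(1-\theta)b})$ (Triebel, Seeley, Yagi).

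The key step is to take $A=G_\nu+I$ on $E=L^p(d\mu_\nu)$ and verify that $\|A^{is}\|_{L^p\to L^p}\lesssim e^{\omega|s|}$ for real $s$. Since $G_\nu$ is nonnegative and self-adjoint on $L^2(d\mu_\nu)$ and generates a symmetric diffusion semigroup (positivity and $L^\infty$-contractivity come from the explicit heat kernel $H_{t,\nu}$, which is positive and satisfies $e^{-tG_\nu}1\le 1$ in the appropriate sense), Stein's universal multiplier theorem / Cowling's theorem applies. It gives that $G_\nu^{is}$ is bounded on $L^p(d\mu_\nu)$, $1<p<\infty$, with growth at most $e^{\omega|s|}$. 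The same holds for $(G_\nu+I)^{is}$: either apply Cowling's theorem to the semigroup $e^{-t}e^{-tG_\nu}$, which is again a symmetric submarkovian semigroup, or use the Laplace-transform representation $m(\lambda)=\lambda\int_0^\infty \phi(t)e^{-t\lambda}\,dt$ for $(\lambda+1)^{is}$. This step is where I expect the most care: one must check the submarkovian property of $e^{-tG_\nu}$ on $L^p(d\mu_\nu)$ for $\nu\neq0$. The kernel formula makes positivity immediate, and $\int H_{t,\nu}(x,y)\,d\mu_\nu(y)\le 1$ follows from the $L^\infty$-contractivity of the semigroup.

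With bounded imaginary powers established, define the analytic family $T_z=A^{-(z\alpha_1+(1-z)\alpha_2)/2}$ on the strip $0\le \mathrm{Re}\, z\le 1$. On the line $\mathrm{Re}\, z=1$ it maps $L^p$ into $L^p_{\alpha_1}$, and on $\mathrm{Re}\, z=0$ into $L^p_{\alpha_2}$, with norms of exponential growth in $|\mathrm{Im}\, z|$. Abstract Stein interpolation for analytic families then gives the inclusion $L^p_\alpha\hookrightarrow(L^p_{\alpha_1},L^p_{\alpha_2})_{[\theta]}$. For the reverse inclusion, given $F$ in the Calder\'on space with $F(\theta)=f$, consider $z\mapsto A^{(z\alpha_1+(1-z)\alpha_2)/2}F(z)$. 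Its boundary values lie in $L^p$, controlled again by the imaginary powers, and the three lines lemma (after multiplying by $e^{\epsilon z^2}$ to tame the exponential growth) yields $\|A^{\alpha/2}f\|_{L^p}\lesssim\|f\|_{[\theta]}$. Alternatively, one can quote the abstract theorem directly once bounded imaginary powers are known, since $D(A^{\beta/2})$ with the graph norm equals $L^p_\beta(d\mu_\nu)$ by Lemma \ref{lem:Equivalence_of_norms_sobolev_space}. The case $\alpha_i=0$ is included trivially.
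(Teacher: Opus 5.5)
Your proposal is correct and follows essentially the paper's route. The paper only sketches the proof: it invokes Triebel's theorem on complex interpolation of domains of fractional powers of a positive operator with bounded imaginary powers, applied to $G_\nu+cI$ on $L^p(d\mu_\nu)$, citing the positivity of $G_\nu+cI$ and the $L^p(d\mu_\nu)$-boundedness of the imaginary powers of $G_\nu$. That is exactly your reduction with $A=G_\nu+I$, and your analytic-family and three-lines argument merely unpacks that theorem.

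One soft spot: deducing $\int H_{t,\nu}(x,y)\,d\mu_\nu(y)\le 1$ from $L^\infty$-contractivity is circular if you are trying to verify submarkovianity. It follows directly from the kernel formula, since
$\int H_{t,\nu}(x,y)\,d\mu_\nu(y)=e^{-|\nu|^2t}e^{-\nu\cdot x'}\int H_t(x,y)\,e^{\nu\cdot y'}\,dy=1$.
This holds because $e^{-tG}$ acts on functions of $x'$ alone as $e^{t\Delta_{x'}}$, and $e^{t\Delta_{x'}}e^{\nu\cdot x'}=e^{t|\nu|^2}e^{\nu\cdot x'}$.
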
 

The proof of the above lemma can be developed following that of Theorem \cite[p. 103]{Interpolation_theory_Triebel}. In doing so, one would require the positivity of $G_{\nu}+cI$ for all $c>0$ on $L^{p}(d\mu_{\nu})$ spaces which is known from \cite[p. 91]{Interpolation_theory_Triebel}, as well as the $L^{p}(d\mu_{\nu})$-boundedness of the imaginary powers $G_{\nu}^{iu}$ which follows from \cite[p. 2203]{g_function}. With that, we omit the proof of this lemma which can easily be verified. 

\medskip 
We now discuss several basic properties of the Sobolev norm. Most of the proofs follow from those in \cite{Sobolev_embedding_Lie_groups} with minor modifications. But, to keep the work self-contained, we shall write some parts where there are some technical details owing to the Grushin structure. 

\begin{proposition} 
\label{prop:natural-power-norm-equi}
For any $\alpha \in \mathbb{N}$ and $p \in (1, \infty)$, we have 
$$
\|f\|_{L^p_{\alpha}(d\mu_\nu)} \sim \sum_{|\gamma| \leq \alpha} \|X^{\gamma} f\|_{L^p(d\mu_\nu)}.
$$
\end{proposition}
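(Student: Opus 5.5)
Both directions go through the norm equivalence of Lemma \ref{lem:Equivalence_of_norms_sobolev_space} with a fixed $c>0$, say $c=1$. So it suffices to show
$$
\|(G_\nu+I)^{\alpha/2} f\|_{L^p(d\mu_\nu)} \sim \sum_{|\gamma|\le\alpha}\|X^\gamma f\|_{L^p(d\mu_\nu)}
$$
for $\alpha\in\mathbb{N}$.

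\emph{The bound $\gtrsim$.} For each $|\gamma|\le\alpha$ I would write
$$
X^\gamma f = R_{\gamma,\nu,1}\,(G_\nu+I)^{-(\alpha-|\gamma|)/2}\,(G_\nu+I)^{\alpha/2}f,
$$
where $R_{\gamma,\nu,1}$ is the local Riesz transform \eqref{local-Riesz}. By Remark \ref{rem:2}, $R_{\gamma,\nu,1}$ is bounded on $L^p(d\mu_\nu)$. By Remark \ref{rem:1}, $(G_\nu+I)^{-(\alpha-|\gamma|)/2}$ is bounded on $L^p(d\mu_\nu)$. This gives $\|X^\gamma f\|_{L^p}\lesssim \|f\|_{L^p_\alpha}$. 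The case $\gamma=0$ is trivial.

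\emph{The bound $\lesssim$, even $\alpha$.} When $\alpha=2m$, I would expand $(G_\nu+I)^m$ as a polynomial in $G_\nu = -\sum X_j^2-\sum X_{j,k}^2-2\nu\cdot X'$. Each term is a word in the vector fields $X_j, X_{j,k}$ of length at most $2m$, with constant coefficients. The constant coefficients are important: non-commutativity ($[X_j,X_{j,k}]=\partial_{x''_k}$ is not in the span) does not matter, because each word is literally some $X^\gamma$ in the paper's multi-index notation, read as an ordered product. If $X^\gamma$ is meant with a fixed ordering, a reordered word is still a composition of the $X$'s, and its norm can be bounded the same way. So $\|(G_\nu+I)^m f\|_{L^p}\le C\sum_{|\gamma|\le 2m}\|X^\gamma f\|_{L^p}$, interpreting $X^\gamma$ as arbitrary words of length $|\gamma|$.

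\emph{The bound $\lesssim$, odd $\alpha$.} This is the main obstacle. For $\alpha=2m+1$ I would use duality, in the spirit of Lemma \ref{lem:Equivalence_of_norms_sobolev_space}. For $g\in L^{p'}(d\mu_\nu)$,
$$
\big((G_\nu+I)^{m+1/2}f,g\big)=\big((G_\nu+I)^{m+1}f,\,(G_\nu+I)^{-1/2}g\big).
$$
Next, write $(G_\nu+I)^{m+1}=(G_\nu+I)^m\big(-\sum_i X_i^2-2\nu\cdot X'+I\big)$, with $X_i$ running over all the vector fields. In each term I would move one $X_i$ onto $g$. The adjoint of $X_i$ in $L^2(d\mu_\nu)$ is $X_i^*=-X_i-2\nu_i$ for $X_i=X_j$, and $X_i^*=-X_{j,k}$ for $X_i = X_{j,k}$. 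The latter holds because $\partial_{x''_k}$ kills the density and $x'_j$ is independent of $x''$.

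This leaves words of length $\le 2m+1$ applied to $f$, paired with $X_i^*(G_\nu+I)^{-1/2}g$ or $(G_\nu+I)^{-1/2}g$. Both are controlled in $L^{p'}(d\mu_\nu)$ by $\|g\|_{L^{p'}}$. For the first, this uses the local Riesz transform $R_{e_i,\nu,1}$ on $L^{p'}$ (Remark \ref{rem:2}, valid since $1<p'<\infty$) together with Remark \ref{rem:1}. The same duality trick handles any leftover commutator issues from the even case.

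Taking the supremum over $\|g\|_{L^{p'}}\le1$ yields $\|(G_\nu+I)^{\alpha/2}f\|_{L^p}\lesssim\sum_{|\gamma|\le\alpha}\|X^\gamma f\|_{L^p}$. The remaining care is a density argument: work on $C_c^\infty$ functions, which are dense in $L^p_\alpha(d\mu_\nu)$ as a core for $G_\nu$, and then pass to the limit.
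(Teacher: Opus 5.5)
Your proposal is correct and follows the paper's proof essentially step for step: the reduction via Lemma \ref{lem:Equivalence_of_norms_sobolev_space}, the local Riesz transform argument for $\gtrsim$, and the direct expansion for even $\alpha$; for odd $\alpha$, both use duality against $(G_\nu+cI)^{-1/2}g$, moving one vector field across via $X_j^*=-X_j-2\nu_j$ and $X_{j,k}^*=-X_{j,k}$. The only difference is cosmetic: the paper proves $\alpha=1$ by duality and then applies that bound to $(G_\nu+cI)^{(\alpha-1)/2}f$, whereas you treat odd $\alpha$ in one step, for which you should write $(G_\nu+I)^{m+1}=(G_\nu+I)(G_\nu+I)^m$ so that the vector field you move onto $g$ is the outermost one.
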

\begin{proof}
Using identity \eqref{eq:Equivalence_of_norms_sobolev_space}, it is enough to prove that 
\begin{align} 
\label{identity1-prop:natural-power-norm-equi}
\|(G_{\nu} + cI)^{\alpha/2} f\|_{L^p(d\mu_\nu)} \sim \sum_{|\gamma| \leq \alpha} \|X^{\gamma} f\|_{L^p(d\mu_\nu)}.
\end{align}

We begin with proving $\gtrsim$ in \eqref{identity1-prop:natural-power-norm-equi}. For the same, take and fix a multi-index $\gamma \in  \left(\mathbb{N} \cup \{0\} \right)^{d_1+d_1d_2}$ such that $|\gamma| \leq \alpha$. Now, using the boundedness of the local Riesz transforms and also of the operators $(G_{\nu} + cI)^{-\alpha}$ for any $\alpha > 0$, we have 
$$
\|X^{\gamma} (G_{\nu} + cI)^{-\alpha/2} f\|_{L^{p}(d\mu_{\nu})} = \|X^{\gamma} (G_{\nu} + cI)^{-|\gamma|/2} (G_{\nu} + cI)^{-(\alpha - |\gamma|)/2} f\|_{L^{p}(d\mu_{\nu})} \lesssim \|f\|_{L^{p}(d\mu_{\nu})},
$$
and equivalently
$$
\|X^{\gamma} f\|_{L^{p}(d\mu_{\nu})} \lesssim  \|(G_{\nu} + cI)^{\alpha/2} f\|_{L^{p}(d\mu_{\nu})}, 
$$
which completes the proof of $\gtrsim$ in \eqref{identity1-prop:natural-power-norm-equi}. 

\medskip 
For the other side inequality in \eqref{identity1-prop:natural-power-norm-equi}, note first that it is obvious when $\alpha$ is a positive even integer. Next, let us consider the case when $\alpha=1$. In the following computations, we shall also make use of the fact that with respect to the measure $d\mu_{\nu}$, we have $X_{j}^{\ast} = -X_{j} - 2\nu_{j}$ and $X_{j,k}^{\ast} = -X_{j,k}$. Now, 
\begin{align*}
& \left\|(G_{\nu} + cI)^{1/2}f \right\|_{L^{p}(d\mu_{\nu})} \\
& = \sup \left\{|((G_{\nu} + cI)^{1/2}f, \, g )| : \|g\|_{L^{p'}(d\mu_{\nu})} = 1\right\}  \\
&  = \sup \left\{|((G_{\nu} + cI) f, \, (G_{\nu} + cI)^{-1/2} g )| : \|g\|_{L^{p'}(d\mu_{\nu})} = 1\right\} \\
& \lesssim \|f\|_{L^{p}(d\mu_{\nu})} + \sum_{j=1}^{d_1} \sup \left\{|( X_j \left(X_{j} f \right), \, (G_{\nu} + cI)^{-1/2} g )| : \|g\|_{L^{p'}(d\mu_{\nu})} = 1\right\} \\ 
& \quad + \sum_{j=1}^{d_1} \sum_{k=1}^{d_2} \sup \left\{|( X_{j,k} \left( X_{j,k} f \right), \, (G_{\nu} + cI)^{-1/2} g )| : \|g\|_{L^{p'}(d\mu_{\nu})} = 1\right\} \\
& \quad + 2 \sum_{j=1}^{d_1} |\nu_j| \, \sup \left\{|( X_{j} f, \, (G_{\nu} + cI)^{-1/2} g )| : \|g\|_{L^{p'}(d\mu_{\nu})} = 1\right\} \\
& = \|f\|_{L^{p}(d\mu_{\nu})} + \sum_{j=1}^{d_1} \sup \left\{|( X_{j} f, \, - (X_{j} + 2\nu_{j})(G_{\nu} + cI)^{-1/2} g )| : \|g\|_{L^{p'}(d\mu_{\nu})} = 1\right\}\\
& \quad + \sum_{j=1}^{d_1} \sum_{k=1}^{d_2} \sup \left\{|( X_{j,k} f, \, - X_{j,k}(G_{\nu} + cI)^{-1/2} g )| : \|g\|_{L^{p'}(d\mu_{\nu})} = 1\right\} \\
& \quad + 2 \sum_{j=1}^{d_1} |\nu_j| \, \sup \left\{|( f, \, - (X_{j} + 2\nu_{j})(G_{\nu} + cI)^{-1/2} g )| : \|g\|_{L^{p'}(d\mu_{\nu})} = 1\right\} \\
& \lesssim \|f\|_{L^{p}(d\mu_{\nu})} + \sum_{j=1}^{d_1} \|X_{j} f\|_{L^{p}(d\mu_{\nu})} + \sum_{j=1}^{d_1} \sum_{k=1}^{d_2} \|X_{j,k} f\|_{L^{p}(d\mu_{\nu})},
\end{align*}
where we used the boundedness of the local Riesz transforms and of the operators $(G_{\nu} + cI)^{-1/2}$ on $L^{p}(d\mu_{\nu})$-spaces (see Remarks \ref{rem:1} and \ref{rem:2}).

\medskip Following the case of $\alpha = 1$ and that of all positive even integers $\alpha$, we can complete the claim for all remaining positive odd integers $\alpha \geq 3$ as follows: 
\begin{align*}
\|(G_{\nu} + cI)^{\alpha/2}f\|_{L^{p}(d\mu_{\nu})} & = \|(G_{\nu} + cI)^{1/2}(G_{\nu} + cI)^{(\alpha-1)/2}f\|_{L^{p}(d\mu_{\nu})} \\
& \lesssim \|(G_{\nu} + cI)^{(\alpha-1)/2} f\|_{L^{p}(d\mu_{\nu})} + \sum_{j=1}^{d_1} \|X_{j} (G_{\nu} + cI)^{(\alpha-1)/2} f\|_{L^{p}(d\mu_{\nu})} \\
& \quad + \sum_{j=1}^{d_1} \sum_{k=1}^{d_2} \|X_{j,k} (G_{\nu} + cI)^{(\alpha-1)/2} f\|_{L^{p}(d\mu_{\nu})} \\
& \lesssim \sum_{|\gamma| \leq \alpha} \|X^{\gamma} f\|_{L^p(d\mu_\nu)},
\end{align*}
where the last step follows by observing that $\alpha-1$ is even, and this completes the proof.
\end{proof}


The following proposition establishes a Banach space isomorphism between the two spaces $L^{p}_{\alpha}(d\mu_{\nu})$ and $L^{p}_{\alpha}(dx)$. 
\begin{proposition} \label{prop:Relation_between_sobolev_norm_with_and_without_drift}
Let $p \in (1,\infty)$ and $\alpha \geq 0$. The operator 
$$
T_{p}: L^{p}_{\alpha}(d\mu_{\nu}) \to L^{p}_{\alpha}(dx),
$$ 
defined by 
$$T_{p}(f)(x) := e^{2\nu\cdot x'/p} f(x),$$ is a Banach space isomorphism.
\end{proposition}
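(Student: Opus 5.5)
The plan is to prove the statement first for integer $\alpha$, using Proposition \ref{prop:natural-power-norm-equi}, and then to pass to general $\alpha \geq 0$ by complex interpolation (Lemma \ref{lem:interpolation}).

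\emph{Case $\alpha = 0$.} This case is immediate. Since $d\mu_{\nu} = e^{2\nu\cdot x'}\,dx$, we have
$$\|T_p f\|_{L^p(dx)}^p = \int |f(x)|^p e^{2\nu\cdot x'}\,dx = \|f\|_{L^p(d\mu_\nu)}^p.$$
So $T_p$ is a surjective isometry $L^p(d\mu_\nu)\to L^p(dx)$, with inverse $T_p^{-1}u = e^{-2\nu\cdot x'/p}u$.

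\emph{Conjugation of the vector fields.} The key algebraic step is to compute how $T_p$ acts on the vector fields. Write $u = T_p f$, so that $f = e^{-2\nu\cdot x'/p}u$. Then
$$T_p X_j T_p^{-1} = X_j - \tfrac{2\nu_j}{p} \qquad \text{and} \qquad T_p X_{j,k} T_p^{-1} = X_{j,k},$$
because $X_{j,k}$ differentiates only in $x''$. Consequently, for every ordered word $X^\gamma$, the conjugate $T_p X^\gamma T_p^{-1}$ is obtained by replacing each $X_j$ in the word by $X_j - 2\nu_j/p$. Expanding this product while keeping the order of the factors gives a linear combination, with constant coefficients depending only on $\nu$, $p$ and $\gamma$, of the sub-words $X^\beta$ with $|\beta|\leq|\gamma|$. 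No commutators need to be produced: subtracting a constant does not change the ordering, and sub-words of an ordered word are again words in the same fields.

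\emph{Case $\alpha \in \mathbb{N}$.} Apply Proposition \ref{prop:natural-power-norm-equi} for the given $\nu$ and again for $\nu = 0$. This gives
$$\|f\|_{L^p_\alpha(d\mu_\nu)} \sim \sum_{|\gamma|\leq\alpha}\|X^\gamma f\|_{L^p(d\mu_\nu)} = \sum_{|\gamma|\leq\alpha}\|T_pX^\gamma T_p^{-1}u\|_{L^p(dx)} \lesssim \sum_{|\beta|\leq\alpha}\|X^\beta u\|_{L^p(dx)} \sim \|u\|_{L^p_\alpha(dx)}.$$
The reverse inequality follows in the same way, using $T_p^{-1}X_jT_p = X_j + 2\nu_j/p$. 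Hence $T_p$ and $T_p^{-1}$ are bounded, and $T_p$ is an isomorphism $L^p_k(d\mu_\nu)\to L^p_k(dx)$ for every $k\in\mathbb{N}\cup\{0\}$. For this I need $X^\gamma f$ to be meaningful (in the distributional sense) for $f$ in the spaces, so that the conjugation identity holds in the same sense. I will first verify it on a dense class, such as $C_c^\infty$ or the range of the heat semigroup, and then extend by density.

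\emph{General $\alpha \geq 0$.} Choose $k\in\mathbb{N}\cup\{0\}$ with $k<\alpha<k+1$, and write $\alpha = \theta k + (1-\theta)(k+1)$ with $\theta\in(0,1)$. The same linear map $T_p$ (it does not depend on $\alpha$) is bounded $L^p_k(d\mu_\nu)\to L^p_k(dx)$ and $L^p_{k+1}(d\mu_\nu)\to L^p_{k+1}(dx)$. Lemma \ref{lem:interpolation}, applied both with the given $\nu$ and with $\nu=0$, identifies the complex interpolation spaces $(\cdot,\cdot)_{[\theta]}$ with $L^p_\alpha(d\mu_\nu)$ and $L^p_\alpha(dx)$ respectively. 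By the interpolation property, $T_p$ is therefore bounded $L^p_\alpha(d\mu_\nu)\to L^p_\alpha(dx)$, and the same argument applied to $T_p^{-1}$ shows that it is bounded in the opposite direction. Thus $T_p$ is a Banach space isomorphism for every $\alpha\geq 0$.

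I expect the main obstacle to be the integer case. There one must make the expansion of the conjugated words precise and justify that both sides of Proposition \ref{prop:natural-power-norm-equi} are finite on the relevant function class. The interpolation step is then formal.
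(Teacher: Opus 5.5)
Your proposal is correct and is essentially the argument the paper has in mind; the paper omits details and defers to the corresponding proposition of Bruno et al. The shared route is to settle integer $\alpha$ through Proposition \ref{prop:natural-power-norm-equi}, applied both for $\nu$ and for $\nu=0$, using exactly your conjugation identities $T_pX_jT_p^{-1}=X_j-2\nu_j/p$ and $T_pX_{j,k}T_p^{-1}=X_{j,k}$, and then pass to real $\alpha\geq 0$ by complex interpolation via Lemma \ref{lem:interpolation}. The paper also leaves the domain issue you flag implicit: read $X^\gamma f$ distributionally (your conjugation identity holds for every locally integrable function) and nothing further is needed at the paper's level of rigor.
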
 
Following the proof of \cite[Proposition 3.5]{Sobolev_embedding_Lie_groups} verbatim, the above proposition can be proved first for integers $\alpha \geq 0$ with the help of Proposition \ref{prop:natural-power-norm-equi} and then for arbitrary reals $\alpha \geq 0$ via interpolation (Lemma \ref{lem:interpolation}). We omit the details.  

\medskip 
Note that since $X_j$ and $X_{j,k}$ do not commute with $G_{\nu}$, the two different types of the Riesz transforms $X^{\gamma} G_{\nu}^{-|\gamma|/2}$ and $G_{\nu}^{-|\gamma|/2} X^{\gamma}$ may not behave in a similar manner. We shall show in a short while that $X^{\gamma} G_{\nu}^{-|\gamma|/2}$ are all $L^{p}_{\alpha}(d\mu_{\nu})$-bounded. At the moment, we do not know how to establish analogous boundedness properties of the other set of Riesz transforms $G_{\nu}^{-|\gamma|/2} X^{\gamma}$. But, the situation for the local Riesz transforms is relatively simple, and we have the following positive result. 

\begin{lemma} \label{lem:boundedness_of_local_Riesz_transform_on_sobolev_spaces-higher-order}
Let $1 < p < \infty, \, \alpha \geq 0$ and $c > 0$. The local Riesz transforms $\tilde{R}_{\gamma,\nu, c} = (G_{\nu} + cI)^{-|\gamma|/2} X^{\gamma}$ are all bounded on $L^{p}_{\alpha}(d\mu_{\nu})$. 
\end{lemma}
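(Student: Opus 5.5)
The plan is to prove the bound first at $\alpha=0$ and at even integers $\alpha=2m$, and then fill in all $\alpha\ge 0$ by complex interpolation (Lemma \ref{lem:interpolation}). Throughout, fix $\gamma$ with $|\gamma|=k$ and write $\tilde R=\tilde R_{\gamma,\nu,c}=(G_\nu+cI)^{-k/2}X^\gamma$. The case $\gamma=0$ is Remark \ref{rem:boundedness-potential-operators}, so we assume $k\geq1$.

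\textbf{Step 1: $L^p$-boundedness, by duality.} We prove that $(G_\nu+cI)^{-k/2}X^{\delta}$ is bounded on $L^p(d\mu_\nu)$ for every multi-index $\delta$ with $|\delta|\le k$. The key input is the adjoint formulas $X_j^\ast=-X_j-2\nu_j$ and $X_{j,k}^\ast=-X_{j,k}$ with respect to $d\mu_\nu$, already used in Proposition \ref{prop:natural-power-norm-equi}. With them, the $L^2(d\mu_\nu)$-adjoint of $(G_\nu+cI)^{-k/2}X^\delta$ is $(X^\delta)^\ast(G_\nu+cI)^{-k/2}$. Expanding the product $(X^\delta)^\ast$ gives a finite linear combination, with constant coefficients, of words $X^\beta$ with $|\beta|\le|\delta|\le k$. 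Each resulting term factors as
\[
X^\beta(G_\nu+cI)^{-|\beta|/2}\,(G_\nu+cI)^{-(k-|\beta|)/2}.
\]
This is bounded on $L^{p'}(d\mu_\nu)$: the first factor by the local Riesz transform bounds of Remark \ref{rem:2}, and the second by Remark \ref{rem:1}. By duality, $(G_\nu+cI)^{-k/2}X^\delta$ is bounded on $L^p(d\mu_\nu)$. Taking $\delta=\gamma$ gives the case $\alpha=0$.

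\textbf{Step 2: even integer order $\alpha=2m$.} By Lemma \ref{lem:Equivalence_of_norms_sobolev_space},
\[
\|\tilde Rf\|_{L^p_{2m}(d\mu_\nu)}\sim\|(G_\nu+cI)^{m}(G_\nu+cI)^{-k/2}X^\gamma f\|_{L^p(d\mu_\nu)}=\|(G_\nu+cI)^{-k/2}(G_\nu+cI)^{m}X^\gamma f\|_{L^p(d\mu_\nu)}.
\]
The key structural fact is that $G_\nu+cI=-\sum X_j^2-\sum X_{j,k}^2-2\sum\nu_jX_j+c$ is a polynomial with constant coefficients in the noncommuting fields $X$. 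Hence $(G_\nu+cI)^mX^\gamma$ is a finite linear combination of words $X^\delta$ with $|\delta|\le 2m+k$, and no polynomial coefficients or commutator issues arise. We split each such word as $X^\delta=X^{\delta_1}X^{\delta_2}$, where $X^{\delta_1}$ consists of the first $\min(k,|\delta|)$ letters, so that $|\delta_1|\le k$ and $|\delta_2|\le 2m$. Step 1 then gives
\[
\|(G_\nu+cI)^{-k/2}X^{\delta_1}X^{\delta_2}f\|_{L^p(d\mu_\nu)}\lesssim\|X^{\delta_2}f\|_{L^p(d\mu_\nu)}.
\]
By Proposition \ref{prop:natural-power-norm-equi}, the right-hand side is $\lesssim\|f\|_{L^p_{2m}(d\mu_\nu)}$. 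Summing over the finitely many words proves the bound for $\alpha=2m$.

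\textbf{Step 3: interpolation.} Given $\alpha>0$, choose $m$ with $2m\ge\alpha$. Lemma \ref{lem:interpolation} identifies $L^p_\alpha(d\mu_\nu)$ as the complex interpolation space between $L^p(d\mu_\nu)$ and $L^p_{2m}(d\mu_\nu)$. Since $\tilde R$ is bounded on both endpoint spaces (Steps 1 and 2), it is bounded on $L^p_\alpha(d\mu_\nu)$.

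The step I expect to need the most care is Step 1. One must check that $\tilde R$, initially defined on a dense class such as $C_c^\infty$, extends consistently, and that the formal adjoint computation is legitimate. The drift terms $-2\nu_j$ in $X_j^\ast$ only produce lower-order words, and these are absorbed by the extra factors $(G_\nu+cI)^{-(k-|\beta|)/2}$, which are uniformly bounded for our fixed $c>0$.
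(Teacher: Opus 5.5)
Your proof is correct, and its outline matches the paper's. Both argue by duality with $X_j^\ast=-X_j-2\nu_j$, $X_{j,k}^\ast=-X_{j,k}$ at $\alpha=0$, then prove an integer-order estimate, then apply complex interpolation via Lemma \ref{lem:interpolation}. The paper's expansion of $X^{\ast\gamma}(G_\nu+cI)^{-|\gamma|/2}$ into terms $X^{\tilde\gamma}(G_\nu+cI)^{-|\gamma|/2}$, $\tilde\gamma\le\gamma$, is exactly your Step 1.

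The real difference is the integer step.

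\textbf{The paper's integer step.} It takes an integer $\alpha\ge|\gamma|$ and merges the exponents to get $(G_\nu+cI)^{(\alpha-|\gamma|)/2}X^\gamma f$. It then uses Proposition \ref{prop:natural-power-norm-equi} in both directions. The upper bound is applied at order $\alpha-|\gamma|$ to $X^\gamma f$; when that order is odd, this rests on the duality argument for the case $\alpha=1$ inside that proposition. The lower bound is applied at order $\alpha$.

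\textbf{Your integer step.} You work only at even orders $2m$ and commute the polynomial $(G_\nu+cI)^m$ past $(G_\nu+cI)^{-k/2}$. You expand $(G_\nu+cI)^mX^\gamma$ into constant-coefficient words, and $(G_\nu+cI)^{-k/2}$ absorbs the leading $k$ letters of each word by the $\alpha=0$ bound. You therefore need only the Riesz-transform direction $\|X^{\delta}f\|_{L^p(d\mu_\nu)}\lesssim\|f\|_{L^p_{2m}(d\mu_\nu)}$ for $|\delta|\le 2m$. Your bound also holds for every $m$, with no requirement relating $2m$ to $|\gamma|$, though this does not matter for the interpolation step.

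The paper's route is a one-line chain once Proposition \ref{prop:natural-power-norm-equi} is available in full. Yours is slightly more self-contained because it bypasses the odd-order part of that proposition. The domain and density caveat you flag (proving the bounds on $C_c^\infty$ and extending) is left equally implicit in the paper.
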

\begin{proof}
Let us first take the case when $\alpha = 0$. As already seen earlier, we have $X_{j}^{\ast} = -X_{j} - 2\nu_{j}$ and $X_{j,k}^{\ast} = -X_{j,k}$ for the measure $d\mu_{\nu}$. We shall also use the notation $X^* = (X_1^*, \ldots, X^*_{d_1}, \, X^*_{1,1}, \ldots, X^*_{d_1,d_2})$. For any $g \in L^{p'}(d\mu_{\nu})$, we have
$$
(\tilde{R}_{\gamma,\nu, c} f, g ) = ( (G_{\nu} + cI)^{-|\gamma|/2} X^{\gamma}, g ) = ( f , X^{\ast \gamma}(G_{\nu} + cI)^{-|\gamma|/2}g ),
$$
which implies that 
\begin{align*}
|(\tilde{R}_{\gamma,\nu, c} f, g)| & \leq \|f\|_{L^{p}(d\mu_{\nu})} \, \|X^{\ast \gamma}(G_{\nu} + cI)^{-|\gamma|/2}g\|_{L^{p'}(d\mu_{\nu})} \\
& \lesssim \|f\|_{L^{p}(d\mu_{\nu})} \sum_{\tilde{\gamma} \leq \gamma}\|X^{\tilde{\gamma}}(G_{\nu} + cI)^{-|\gamma|/2}g\|_{L^{p'}(d\mu_{\nu})} 
\lesssim \|f\|_{L^{p}(d\mu_{\nu})} \|g\|_{L^{p'}(d\mu_{\nu})}, 
\end{align*}
where we have used Remarks \ref{rem:1} and \ref{rem:2}, and hence
$$
\|\tilde{R}_{\gamma,\nu, c} f\|_{L^{p}(d\mu_{\nu})} \lesssim \|f\|_{L^{p}(d\mu_{\nu})}.
$$
Next, take arbitrary positive integer $\alpha$ such that $\alpha \geq |\gamma|$. For such an $\alpha$, we have 
\begin{align*}
\|\tilde{R}_{\gamma,\nu, c} f\|_{L^{p}_{\alpha}(d\mu_{\nu})} & =    
\|(G_{\nu} + cI)^{-|\gamma|/2} X^{\gamma} f\|_{L^{p}_{\alpha}(d\mu_{\nu})} \\
& \sim \|(G_{\nu} + cI)^{(\alpha -|\gamma|)/2} X^{\gamma} f\|_{L^{p}(d\mu_{\nu})} \\
& \sim \sum_{|\tilde{\gamma}| \leq \alpha - |\gamma|} \|X^{\tilde{\gamma}} (X^{\gamma} f)\|_{L^p(d\mu_\nu)} 
\lesssim \sum_{|\tilde{\gamma}| \leq \alpha} \|X^{\tilde{\gamma}} f\|_{L^p(d\mu_\nu)} 
\sim \|f\|_{L^{p}_{\alpha}(d\mu_\nu)}, 
\end{align*}
where the third and the last steps follow from Proposition \ref{prop:natural-power-norm-equi}. 

\medskip 
Finally, the result for any arbitrary real $\alpha > 0$ can be obtained via the standard interpolation techniques (using, for example, the complex interpolation from Lemma \ref{lem:interpolation}). 
\end{proof}


With the help of Lemma \ref{lem:boundedness_of_local_Riesz_transform_on_sobolev_spaces-higher-order}, we are able to prove the following recursive characterisation of Sobolev spaces. 

\begin{proposition}[Recursive characterisation] 
\label{prop:recursive_characterisation}
Let $p \in (1,\infty)$ and $\alpha \geq 0$, then $f \in L^p_{\alpha+1}(d\mu_\nu)$ if and only if $f, \, X_j f \, \text{and} \,  X_{j,k} f \in L^p_{\alpha}(d\mu_\nu)$, for all $1 \leq j \leq d_1$ and $1 \leq k \leq d_2$. Moreover, in that case, we have 
\begin{align}
\label{identity-prop:recursive_characterisation}
\|f\|_{L^p_{\alpha+1}(d\mu_\nu)} \sim \|f\|_{L^p_{\alpha}(d\mu_\nu)} + \sum_{j=1}^{d_1} \|X_j f\|_{L^p_{\alpha}(d\mu_\nu)}+ \sum_{j=1}^{d_1} \sum_{k=1}^{d_2} \|X_{j,k} f\|_{L^p_{\alpha}(d\mu_\nu)}.
\end{align} 
\end{proposition}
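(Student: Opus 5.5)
We will work throughout with the equivalent norm from Lemma \ref{lem:Equivalence_of_norms_sobolev_space}, fixing some $c>0$, so that $\|f\|_{L^p_\beta(d\mu_\nu)} \sim \|(G_\nu+cI)^{\beta/2} f\|_{L^p(d\mu_\nu)}$ for every $\beta \geq 0$. The proof splits into the two inequalities in \eqref{identity-prop:recursive_characterisation}; each direction also yields the corresponding ``if and only if'' membership statement.

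\emph{The bound $\gtrsim$.} Let $f \in L^p_{\alpha+1}(d\mu_\nu)$. The term $\|f\|_{L^p_\alpha(d\mu_\nu)}$ is controlled by the natural embedding in Remark \ref{rem:boundedness-potential-operators}(i). For a single vector field $Y \in \{X_j, X_{j,k}\}$, the plan is to write
$$
Y f = \bigl(Y (G_\nu+cI)^{-1/2}\bigr)(G_\nu+cI)^{1/2} f .
$$
Since $(G_\nu+cI)^{1/2} f \in L^p_\alpha(d\mu_\nu)$ with norm $\lesssim \|f\|_{L^p_{\alpha+1}(d\mu_\nu)}$, it suffices to show that the first-order local Riesz transform $R_{Y,\nu,c} = Y(G_\nu+cI)^{-1/2}$ is bounded on $L^p_\alpha(d\mu_\nu)$. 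Remark \ref{rem:2} only gives this for $\alpha=0$. Lemma \ref{lem:boundedness_of_local_Riesz_transform_on_sobolev_spaces-higher-order} concerns the other ordering $(G_\nu+cI)^{-1/2}Y$, so we argue directly instead. For integer $\alpha$, Proposition \ref{prop:natural-power-norm-equi} gives
$$
\|Y(G_\nu+cI)^{-1/2}g\|_{L^p_\alpha} \sim \sum_{|\gamma|\le\alpha}\|X^\gamma Y (G_\nu+cI)^{-1/2} g\|_{L^p} \lesssim \sum_{|\gamma'|\le \alpha+1}\|X^{\gamma'}(G_\nu+cI)^{-1/2}g\|_{L^p}.
$$
Applying Proposition \ref{prop:natural-power-norm-equi} again, the right-hand side is $\sim \|(G_\nu+cI)^{-1/2}g\|_{L^p_{\alpha+1}} \sim \|g\|_{L^p_\alpha}$. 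For non-integer $\alpha$ we then interpolate using Lemma \ref{lem:interpolation}.

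\emph{The bound $\lesssim$.} Suppose $f, X_jf, X_{j,k}f \in L^p_\alpha(d\mu_\nu)$. We write
$$
(G_\nu+cI)^{(\alpha+1)/2} f = (G_\nu+cI)^{-1/2}(G_\nu+cI)\,(G_\nu+cI)^{\alpha/2}f
$$
and expand $G_\nu + cI = -\sum Y^2 - 2\nu\cdot X' + c$. A cleaner route is to use the adjoint identities $X_j^\ast=-X_j-2\nu_j$ and $X_{j,k}^\ast = -X_{j,k}$, which give $G_\nu = \sum_Y Y^\ast Y$ with the sum over $Y\in\{X_j,X_{j,k}\}$. Hence
$$
(G_\nu+cI)^{(\alpha+1)/2} f = (G_\nu+cI)^{\alpha/2}\Bigl[\sum_Y (G_\nu+cI)^{-1/2}Y^\ast (Yf) + c\,(G_\nu+cI)^{-1/2} f\Bigr].
$$
Each operator $(G_\nu+cI)^{-1/2}Y^\ast$ is, up to a bounded multiple of $(G_\nu+cI)^{-1/2}$, of the form $-\tilde R_{Y,\nu,c}$. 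By Lemma \ref{lem:boundedness_of_local_Riesz_transform_on_sobolev_spaces-higher-order} and Remark \ref{rem:boundedness-potential-operators}(ii), these are bounded on $L^p_\alpha(d\mu_\nu)$. Taking $L^p$ norms therefore yields
$$
\|f\|_{L^p_{\alpha+1}} \lesssim \|f\|_{L^p_\alpha}+\sum_Y\|Yf\|_{L^p_\alpha}.
$$

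\emph{Main obstacle.} The delicate step is rigor about domains. We need to justify that $Yf$ lies in the right space and that $(G_\nu+cI)^{\alpha/2}$ commutes past $(G_\nu+cI)^{-1/2}Y^\ast$ in the identity above. Rather than commuting operators, the intended argument is a duality or density one: first prove the inequality for $f$ in a dense class, such as $C_c^\infty$ functions or the range of $(G_\nu+cI)^{-N}$ applied to them. The Sobolev norms are then read off via the bounded operators $\tilde R_{Y,\nu,c}$ and $(G_\nu+cI)^{-1/2}$ on $L^p_\alpha(d\mu_\nu)$, and a closedness argument extends the result to general $f$. The interpolation step in the $\gtrsim$ direction is routine given Lemma \ref{lem:interpolation}.
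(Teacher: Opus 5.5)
Your proposal is correct. The $\lesssim$ half is the paper's argument in different packaging. The paper writes $\|f\|_{L^p_{\alpha+1}}\sim\|(G_\nu+cI)^{-1/2}(G_\nu+cI)f\|_{L^p_\alpha}$, expands $G_\nu+cI$ into $-\sum X_j^2-\sum X_{j,k}^2-2\nu\cdot X'+cI$, and applies Lemma \ref{lem:boundedness_of_local_Riesz_transform_on_sobolev_spaces-higher-order} to $X_jf$ and $X_{j,k}f$. Your identity $G_\nu=\sum_Y Y^\ast Y$ organises exactly the same computation.

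The $\gtrsim$ half takes a different route. You set Lemma \ref{lem:boundedness_of_local_Riesz_transform_on_sobolev_spaces-higher-order} aside because it concerns the ordering $(G_\nu+cI)^{-1/2}Y$. The paper uses precisely that lemma, but at level $\alpha+1$ rather than $\alpha$. Since $(G_\nu+cI)^{\alpha/2}Yf=(G_\nu+cI)^{(\alpha+1)/2}\tilde R_{Y,\nu,c}f$, one gets $\|Yf\|_{L^p_\alpha}\sim\|\tilde R_{Y,\nu,c}f\|_{L^p_{\alpha+1}}\lesssim\|f\|_{L^p_{\alpha+1}}$ in one line, with no new interpolation.

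Your route proves the $L^p_\alpha$-boundedness of $Y(G_\nu+cI)^{-1/2}$ directly, from Proposition \ref{prop:natural-power-norm-equi} plus an interpolation. The key step is the same word-concatenation step the paper uses inside the proof of that lemma. This costs an extra interpolation, but it buys something. It establishes the first-order case of Corollary \ref{corollary:1} independently. The paper only reaches that corollary after Theorem \ref{thm:Riesz-transform-boundedness-Sobolev}, whose proof in turn relies on this proposition.

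Your closing remarks on domains and density go beyond what the paper does, since it works formally throughout. So they are not a gap relative to the paper, although as written they are a plan rather than an argument.
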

\begin{proof}
We begin with proving $\gtrsim$ in \eqref{identity-prop:recursive_characterisation}. 
In doing so, we shall use notation $\tilde{R}_{j, \, \nu, \, c}$ and $\tilde{R}_{j, \, k, \, \nu, \, c}$ for the first order local Riesz transforms $(G_{\nu}+cI)^{-1/2}X_{j}$ and $(G_{\nu}+cI)^{-1/2}X_{j, \, k}$ respectively. Now, note that 
\begin{align*}
& \|f\|_{L^p_{\alpha}(d\mu_\nu)} + \sum_{j=1}^{d_1} \|X_j f\|_{L^p_{\alpha}(d\mu_\nu)}+ \sum_{j=1}^{d_1} \sum_{k=1}^{d_2} \|X_{j,k} f\|_{L^p_{\alpha}(d\mu_\nu)} \\
& \lesssim \|f\|_{L^p_{\alpha+1}(d\mu_\nu)} + \sum_{j=1}^{d_1} \|(G_{\nu}+cI)^{\alpha/2}X_j f\|_{L^p(d\mu_\nu)}+ \sum_{j=1}^{d_1} \sum_{k=1}^{d_2} \|(G_{\nu}+cI)^{\alpha/2}X_{j,k} f\|_{L^p(d\mu_\nu)} \\
& \lesssim \|f\|_{L^p_{\alpha+1}(d\mu_\nu)} + \sum_{j=1}^{d_1} \|(G_{\nu}+cI)^{(\alpha+1)/2}\tilde{R}_{j,\nu, c} f\|_{L^p(d\mu_\nu)}+ \sum_{j=1}^{d_1} \sum_{k=1}^{d_2} \|(G_{\nu}+cI)^{(\alpha+1)/2}\tilde{R}_{j,k, \nu, c} f\|_{L^p(d\mu_\nu)} \\
& \lesssim \|f\|_{L^p_{\alpha+1}(d\mu_\nu)} + \sum_{j=1}^{d_1} \|\tilde{R}_{j, \nu, c} f\|_{L^p_{\alpha+1}(d\mu_\nu)}+ \sum_{j=1}^{d_1} \sum_{k=1}^{d_2} \|\tilde{R}_{j,k, \nu, c} f\|_{L^p_{\alpha+1}(d\mu_\nu)} \\  
& \lesssim \|f\|_{L^p_{\alpha+1}(d\mu_\nu)},
\end{align*}
where the last step follows from Lemma \ref{lem:boundedness_of_local_Riesz_transform_on_sobolev_spaces-higher-order}. 

\medskip 
Next, we prove $\lesssim$ in \eqref{identity-prop:recursive_characterisation}. Once again, using Lemma $\ref{lem:boundedness_of_local_Riesz_transform_on_sobolev_spaces-higher-order}$, we have
\begin{align*}
\|f\|_{L^p_{\alpha+1}(d\mu_\nu)} & \sim \|(G_{\nu}+cI)^{-1/2} \, (G_{\nu}+cI) f\|_{L^p_{\alpha}(d\mu_\nu)} \\
& \lesssim \sum_{j=1}^{d_1} \|(G_{\nu}+cI)^{-1/2} \, X_{j}^{2} f\|_{L^p_{\alpha}(d\mu_\nu)} + \sum_{j=1}^{d_1} \sum_{k=1}^{d_2} \|(G_{\nu}+cI)^{-1/2} \, X_{j,k}^{2} f\|_{L^p_{\alpha}(d\mu_\nu)} \\
& \quad + \sum_{j=1}^{d_1} \|(G_{\nu}+cI)^{-1/2} \,  X_{j} f\|_{L^p_{\alpha}(d\mu_\nu)} + \|(G_{\nu}+cI)^{-1/2} \,  f\|_{L^p_{\alpha}(d\mu_\nu)} \\
& \lesssim \sum_{j=1}^{d_1} \|X_{j} f\|_{L^p_{\alpha}(d\mu_\nu)} + \sum_{j=1}^{d_1} \sum_{k=1}^{d_2} \|X_{j,k} f\|_{L^p_{\alpha}(d\mu_\nu)} + \| f\|_{L^p_{\alpha}(d\mu_\nu)}, 
\end{align*}
which completes the proof of Proposition \ref{prop:recursive_characterisation}.
\end{proof}


As mentioned in the introduction, as a byproduct of the results proved thus far, we shall prove the Sobolev space boundedness of the Riesz transform of arbitrary order, that is, Theorem \ref{thm:Riesz-transform-boundedness-Sobolev}.
\begin{proof}[Proof of Theorem \ref{thm:Riesz-transform-boundedness-Sobolev}:]
Note first that for $\alpha = 0$, the space $L^{p}_{0}(d\mu_{\nu})$ is nothing but $L^{p}(d\mu_{\nu})$ and the boundedness of $R_{\gamma}$ on $L^p$-spaces is already known (see Remark \ref{rem:2}). Now, we show that given an $\alpha \geq 0$, if all the Riesz transforms $R_{\gamma}$ (for all multi-indices $\gamma$) are bounded on $L^{p}_{\alpha}(d\mu_{\nu})$, then they all are also bounded on $L^{p}_{\alpha+1}(d\mu_{\nu})$. Once we prove it, the result for arbitrary real $\alpha \geq 0$ would follow from the standard interpolation techniques. 
So, let us assume the boundedness of the Riesz transforms $R_{\gamma}$ on $L^{p}_{\alpha}(d\mu_{\nu})$ for all multi-indices, and then 
\begin{align*}
\|R_{\gamma}f\|_{L^{p}_{\alpha+1}(d\mu_{\nu})} & \sim \|R_{\gamma}f\|_{L^p_{\alpha}(d\mu_\nu)} + \sum_{j=1}^{d_1} \|X_j R_{\gamma}f\|_{L^p_{\alpha}(d\mu_\nu)}+ \sum_{j=1}^{d_1} \sum_{k=1}^{d_2} \|X_{j,k} R_{\gamma} f\|_{L^p_{\alpha}(d\mu_\nu)} \\
& \lesssim \|f\|_{L^p_{\alpha}(d\mu_\nu)} + \sum_{j=1}^{d_1} \|X_j X^{\gamma} \, G_{\nu}^{-|\gamma|/2}f\|_{L^p_{\alpha}(d\mu_\nu)}+ \sum_{j=1}^{d_1} \sum_{k=1}^{d_2} \|X_{j,k} X^{\gamma} \, G_{\nu}^{-|\gamma|/2} f\|_{L^p_{\alpha}(d\mu_\nu)} \\
& \lesssim \|f\|_{L^p_{\alpha}(d\mu_\nu)} + \sum_{j=1}^{d_1} \| G_{\nu}^{1/2}f\|_{L^p_{\alpha}(d\mu_\nu)}+ \sum_{j=1}^{d_1} \sum_{k=1}^{d_2} \| G_{\nu}^{1/2}f\|_{L^p_{\alpha}(d\mu_\nu)} \\
& \lesssim \|f\|_{L^p_{\alpha+1}(d\mu_\nu)},
\end{align*}
where we have made use of Proposition \ref{prop:recursive_characterisation} in the first step, whereas the last step follows from the definition of the Sobolev spaces and the embedding given in Remark \ref{rem:boundedness-potential-operators}. This completes the proof of the Theorem \ref{thm:Riesz-transform-boundedness-Sobolev}. 
\end{proof}

\begin{corollary} 
\label{corollary:1}
Let $1 < p < \infty$ and $\alpha \geq 0$. Given any multi-index $\gamma$ and $c>0$, the local Riesz transforms $R_{\gamma, \nu, c} = X^{\gamma} (G_{\nu} + cI)^{-|\gamma|/2}$ are bounded on $L^{p}_{\alpha}(d\mu_{\nu})$. 
\end{corollary}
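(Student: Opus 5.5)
The plan is to factor the local Riesz transform through the global one. Write
$$
R_{\gamma,\nu,c} = X^{\gamma}(G_\nu+cI)^{-|\gamma|/2} = \left(X^{\gamma} G_\nu^{-|\gamma|/2}\right)\left(G_\nu^{|\gamma|/2}(G_\nu+cI)^{-|\gamma|/2}\right) = R_\gamma \, G_\nu^{|\gamma|/2}(G_\nu+cI)^{-|\gamma|/2},
$$
exactly as in Remark \ref{rem:2}. This uses the functional calculus identity $G_\nu^{-|\gamma|/2}G_\nu^{|\gamma|/2} = I$ on the relevant domain, which is available from the preliminaries, where fractional powers satisfy $G_\nu^{a+b}=G_\nu^aG_\nu^b$.

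The key steps, in order, are as follows.

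\emph{First factor.} By Theorem \ref{thm:Riesz-transform-boundedness-Sobolev}, the Riesz transform $R_\gamma$ is bounded on $L^p_\alpha(d\mu_\nu)$.

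\emph{Second factor.} By Remark \ref{rem:boundedness-potential-operators}(ii), applied with exponent $|\gamma|/2>0$ and $\beta=\alpha$, the operator $G_\nu^{|\gamma|/2}(G_\nu+cI)^{-|\gamma|/2}$ is bounded on $L^p_\beta(d\mu_\nu)$. That argument commutes $(G_\nu+cI)^{\beta/2}$ past the operator and then invokes Remark \ref{rem:1} together with Lemma \ref{lem:Equivalence_of_norms_sobolev_space}. The case $\alpha=0$ is directly Remark \ref{rem:1}.

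\emph{Composition.} Composing the two bounded operators gives $\|R_{\gamma,\nu,c}f\|_{L^p_\alpha(d\mu_\nu)}\lesssim \|f\|_{L^p_\alpha(d\mu_\nu)}$. The trivial case $\gamma=0$ is the identity.

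The only delicate point I expect is justifying the factorization on a dense class, and this is minor. I would verify it first for $f$ in a core such as the range of $(G_\nu+cI)^{-N}$ applied to test functions, where all the powers make sense and commute, being functions of the same operator $G_\nu$. Density of this class in $L^p_\alpha(d\mu_\nu)$, together with the closedness of $X^\gamma$, then extends the bound to all of $L^p_\alpha(d\mu_\nu)$.
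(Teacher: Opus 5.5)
Your proposal is correct and matches the paper's intended argument: the corollary is stated without proof right after Theorem \ref{thm:Riesz-transform-boundedness-Sobolev}, and it follows from the factorization $R_{\gamma,\nu,c} = R_\gamma\, G_\nu^{|\gamma|/2}(G_\nu+cI)^{-|\gamma|/2}$ of Remark \ref{rem:2}, together with Theorem \ref{thm:Riesz-transform-boundedness-Sobolev} and Remark \ref{rem:boundedness-potential-operators}(ii), exactly as you do. Your density step does no harm but is not needed, since that factorization already holds as an identity of bounded operators on $L^p(d\mu_\nu)$.
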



\section{Embedding theorems} 
\label{sec:proof-embedding}
In this section, we shall prove our main embedding result, that is, Theorem \ref{thm:embedding}. We will make use of the following estimate of the integral kernel of $(G + cI)^{-\alpha/2}$.

\begin{lemma} \label{lemma:kernel_estimate_1}
Given $\alpha, \, c > 0$, let $K_{\alpha, \, c}$ denote the integral kernel of the operator $(G+cI)^{-\alpha/2}$. For every $p \in [1,\infty]$ and 
$\alpha > Q/p'$, we have
\begin{equation} \label{eq:kernel_estimate_1}
\sup_{x} |B(x,1)|^{1/p'}\left(\int_{\R^{d_1+d_2}} |K_{\alpha, \, c}(x,y)|^{p} \, dy\right)^{1/p} \leq C,
\end{equation}
for some constant $C>0$ and with usual modifications for $p=\infty$. 
\end{lemma}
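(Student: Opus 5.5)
We write the kernel through the subordination identity \eqref{integral-identity-negative-fractional-power} with $\nu=0$:
$$
K_{\alpha,c}(x,y)=\frac{1}{\Gamma(\alpha/2)}\int_0^\infty t^{\alpha/2-1}e^{-ct}H_t(x,y)\,dt .
$$
We then apply Minkowski's integral inequality in $L^p(dy)$ to reduce matters to the $L^p$ norms of the heat kernel:
$$
\Big(\int |K_{\alpha,c}(x,y)|^p\,dy\Big)^{1/p}\lesssim \int_0^\infty t^{\alpha/2-1}e^{-ct}\,\|H_t(x,\cdot)\|_{L^p(dy)}\,dt .
$$
The case $p=\infty$ is handled with a supremum in place of the $L^p$ norm.

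To bound $\|H_t(x,\cdot)\|_{L^p}$ we use the Gaussian upper bound. This gives $\|H_t(x,\cdot)\|_{L^\infty}\lesssim |B(x,\sqrt t)|^{-1}$ and $\|H_t(x,\cdot)\|_{L^1}\lesssim 1$. The $L^1$ bound follows by summing over the annuli $\{2^{k}\sqrt t\le\rho(x,y)<2^{k+1}\sqrt t\}$ and using doubling, which gives $|B(x,2^{k+1}\sqrt t)|\lesssim 2^{kQ}|B(x,\sqrt t)|$ against the factor $e^{-b'4^k}$. Interpolating (Hölder) between the two gives
$$
\|H_t(x,\cdot)\|_{L^p(dy)}\lesssim |B(x,\sqrt t)|^{-1/p'} .
$$
Multiplying by $|B(x,1)|^{1/p'}$, it therefore suffices to bound
$$
\int_0^\infty t^{\alpha/2-1}e^{-ct}\Big(\frac{|B(x,1)|}{|B(x,\sqrt t)|}\Big)^{1/p'}dt
$$
uniformly in $x$.

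We split this integral at $t=1$ and use \eqref{eq:ball_volume}.

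\textbf{Range $t\ge1$.} Since $|B(x,\sqrt t)|\ge |B(x,1)|$, the ratio is at most $1$. The tail integral is then bounded by $\int_1^\infty t^{\alpha/2-1}e^{-ct}\,dt<\infty$, with no condition on $\alpha$ beyond positivity.

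\textbf{Range $t<1$.} By \eqref{eq:ball_volume},
$$
\frac{|B(x,1)|}{|B(x,\sqrt t)|}\sim t^{-(d_1+d_2)/2}\Big(\frac{1+|x'|}{\sqrt t+|x'|}\Big)^{d_2}\lesssim t^{-(d_1+d_2)/2}\,t^{-d_2/2}=t^{-Q/2},
$$
uniformly in $x'$. This is just the doubling estimate $|B(x,R)|\lesssim (R/r)^Q|B(x,r)|$. The integrand is then bounded by $t^{\alpha/2-1-Q/(2p')}$, which is integrable near $0$ exactly when $\alpha>Q/p'$.

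The main point is this small-$t$ regime: one needs the uniform-in-$x$ volume ratio bound with the homogeneous dimension $Q$, rather than the local dimension $d_1+d_2$, to handle points near $x'=0$, and this matches the hypothesis $\alpha>Q/p'$. The remaining care is in the endpoints. For $p=1$ we have $p'=\infty$, and the bound is just $\|K\|_{L^1}\lesssim 1$ for any $\alpha>0$. For $p=\infty$ the argument works with the supremum in place of the $L^p$ norm.
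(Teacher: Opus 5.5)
Your proof is correct and takes essentially the same route as the paper. Both arguments use subordination, Minkowski's inequality, the Gaussian upper bound giving $\|H_t(x,\cdot)\|_{L^p(dy)} \lesssim |B(x,\sqrt t)|^{-1/p'}$, and the doubling bound $|B(x,\sqrt t)|^{-1} \lesssim |B(x,1)|^{-1}\max\{t^{-Q/2},1\}$, which makes the $t$-integral converge exactly when $\alpha > Q/p'$; the only cosmetic differences are that you get the $L^p$ heat-kernel bound by interpolating between its $L^1$ and $L^\infty$ bounds instead of integrating the Gaussian directly, and that you fold $p=\infty$ into the same scheme rather than treating it separately.
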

\begin{proof}
We first prove the above estimate for $p = \infty$ and $\alpha > Q$. Note that in view of \eqref{integral-identity-negative-fractional-power} (which is also true for $\nu = 0$), we have
\begin{align*}
|K_{\alpha, \, c}(x,y)| & =  \frac{1}{\Gamma(\alpha/2)}\int_{0}^\infty t^{\alpha/2-1} \, e^{-ct} \, H_{t}(x,y) \, dt  \\
& \lesssim \int_{0}^\infty t^{\alpha/2-1} \, e^{-ct} \, |B(x,\sqrt{t})|^{-1} \, e^{-b'\rho(x,y)^2/t} \, dt  \\
& \lesssim |B(x,1)|^{-1}  \int_{0}^\infty t^{\alpha/2-1} \, e^{-ct} \, \max \left\{t^{-Q/2}, \, 1 \right\} dt  \\
& \lesssim |B(x,1)|^{-1}  \int_{0}^\infty t^{\frac{\alpha-Q}{2}-1} \, e^{-\frac{ct}{2}} \, dt, 
\end{align*}
where the last integral converges for any $\alpha > Q$. Equivalently, 
$$
\sup_{x, \, y} \, |B(x,1)| \, |K_{\alpha, \, c}(x,y)| \leq C,
$$
which is \eqref{eq:kernel_estimate_1} when $p=\infty.$

\medskip 
Let us now prove the remaining case, that is, when $1 \leq p < \infty$ and $\alpha > Q/p'$. Again, in view of \eqref{integral-identity-negative-fractional-power}, we have 
\begin{align*}
\left(\int_{\R^{d_1+d_2}} |K_{\alpha, \, c}(x,y)|^{p} \, dy\right)^{1/p} 
& = \left(\int_{\R^{d_1+d_2}} \left|\frac{1}{\Gamma(\alpha/2)} \int_{0}^\infty t^{\alpha/2-1} \, e^{-ct} \, H_{t}(x,y) \, dt \right|^{p}\, dy\right)^{1/p} \\
& \leq \int_{0}^\infty  |B(x, \sqrt{t})|^{-1} \, t^{\alpha/2-1} \, e^{-ct} \left(\int_{\R^{d_1+d_2}} e^{-p b'\rho(x,y)^{2}/t} \, dy \right)^{1/p} dt \\
& \lesssim \int_{0}^\infty |B(x, \sqrt{t})|^{-1/p'} \, t^{\alpha/2-1} e^{-c t} \, dt \\
& \lesssim |B(x, 1)|^{-1/p'} \, \int_{0}^\infty t^{\alpha/2-1} e^{-c t} \, \max \{t^{-Q/(2 p')}, 1\} \, dt \\ 
& \lesssim |B(x, 1)|^{-1/p'},
\end{align*} 
where the convergence in the last step holds true whenever $\alpha > Q/p'$. Hence, 
$$
\sup_{x} |B(x, 1)|^{1/p'} \left(\int_{\R^{d_1+d_2}} |K_{\alpha, \, c}(x,y)|^{p} \, dy\right)^{1/p} \leq C,
$$
which is \eqref{eq:kernel_estimate_1} for $1 \leq p < \infty$, and this completes the proof of the lemma. 
\end{proof}


\medskip 
We are now in a position to prove Theorem \ref{thm:embedding}.

\begin{proof}[Proof of Theorem \ref{thm:embedding}:]

We shall prove all parts of the theorem one by one. 

\medskip \noindent 
\textbf{\underline{Part (i)}:} Here, we shall prove that for any $1<p \leq q<\infty$ and $\frac{1}{p}-\frac{1}{q} \leq \frac{\alpha}{Q}$, we have
\begin{align} \label{part-1-theorem-embedding}
\|V_\nu(\cdot,1)^{\frac{1}{p}-\frac{1}{q}} \, f\|_{L^q(d\mu_\nu)} \lesssim \|f\|_{L^p_\alpha(d\mu_\nu)}.
\end{align}

We shall first prove \eqref{part-1-theorem-embedding} for $\nu = 0$. 

\medskip 
\textbf{\underline{Case I}:} When $p = q$. In this case, the above inequality is trivially true.

\medskip 
\textbf{\underline{Case II}:}  When $p < q$. The claimed inequality is known to be true when $\frac{1}{p}-\frac{1}{q} = \frac{\alpha}{Q}$ (see \cite[Theorem 2.6]{Bagchi-Basak-Garg-Ghosh-sparse-pseudo-multiplier-grushin-II-JGA-2024}), and from this the same holds true in the case when $\frac{1}{p}-\frac{1}{q} < \frac{\alpha}{Q}$ by invoking the embedding given in Remark \ref{rem:boundedness-potential-operators}. This completes the proof in the case of $\nu = 0$.

\medskip Now, let us work with $\nu \neq 0$. In this case, note that 
\begin{align*}
\|V_{\nu}(\cdot,1)^{\frac{1}{p}-\frac{1}{q}}f\|_{L^q(d\mu_\nu)} & = 
\left(\int_{\R^{d_1+d_2}} \left| V_{\nu}(x,1)^{\frac{1}{p}-\frac{1}{q}}f(x) \right|^q d\mu_\nu\right)^{1/q} \\
& \sim  \left( \int_{\R^{d_1+d_2}} \left| |B(x,1)|^{\frac{1}{p}-\frac{1}{q}} e^{2\nu\cdot x'\left(\frac{1}{p}-\frac{1}{q}\right)}f(x) e^{\frac{2\nu\cdot x'}{q}} \right|^q \, dx \right)^{1/q} \\
&= \left( \int_{\R^{d_1+d_2}} \left| |B(x,1)|^{\frac{1}{p}-\frac{1}{q}} T_{p}(f) (x) \right|^q \, dx \right)^{1/q} \\
& \lesssim \|T_{p}(f)\|_{L^p_\alpha(dx)} \\ 
& \sim \|f\|_{L^p_\alpha(d\mu_\nu)}, 
\end{align*}
where the second last step follows from the case when $\nu = 0$ and the final step follows from Proposition \ref{prop:Relation_between_sobolev_norm_with_and_without_drift}. This completes the proof of inequality \eqref{part-1-theorem-embedding}. 

\medskip \noindent 
\textbf{\underline{Part (ii)}:} 
We shall prove that for $1<p<\infty$ and $\alpha>Q/p,$ we have
\begin{align} \label{part-2-theorem-embedding}
\|V_\nu(\cdot,1)^{\frac{1}{p}}f\|_{L^\infty(d\mu_{\nu})} \lesssim \|f\|_{L^p_\alpha(d\mu_\nu)}.
\end{align}

Let us first prove it for $\nu = 0$. For that, define $g = (G + cI)^{\alpha/2} \, f$, and then using Lemma \ref{lemma:kernel_estimate_1}, we get
\begin{align*}
|f(x)| =  |(G + cI)^{-\alpha/2} \, g(x)| 
& = \left|\int_{\R^{d_1+d_2}} K_{\alpha, \, c} (x,y) \, g(y) \, dy \right| \\
& \leq \|g\|_{L^{p}(dx)} \, \left(\int_{\R^{d_1+d_2}} |K_{\alpha, \, c}(x,y)|^{p'} \, dy \right)^{1/p'} \\
& \lesssim \|f\|_{L^{p}_{\alpha}(dx)} \, |B(x,1)|^{-1/p},
\end{align*}
which implies that 
$$
\||B(\cdot, 1)|^{1/p} \, f\|_{L^{\infty}(dx)} \lesssim \|f\|_{L^{p}_{\alpha}(dx)}.
$$

Next, let us work with an arbitrary drift vector $\nu \neq 0.$ Take and fix large enough $c$ satisfying $ c > \max\{\frac{1}{4b'}, \, \frac{4 |\nu|^2}{b'} \left| \frac{1}{p} - \frac{1}{p'} \right|^2\}$ and define $g = (G_{\nu}+cI)^{\alpha/2}f$. Let $K_{\alpha, \, c}^{\nu}$ denote the integral kernel of the operator $(G_{\nu}+cI)^{-\alpha/2}$.
Then, 
\begin{align*}
|e^{2\nu\cdot x'/p} \, f(x)| & = |e^{2\nu\cdot x'/p}  \, (G_\nu+cI)^{-\alpha/2} \, g(x)| \\ 
& = \left|\int_{\R^{d_1+d_2}} e^{2\nu\cdot x'/p} \, K_{\alpha, \, c}^{\nu}(x,y) \, g(y) \, d\mu_{\nu}(y)\right| \\ 
& \leq \|g\|_{L^p(d\mu_\nu)} \left(\int_{\R^{d_1+d_2}} |K_{\alpha, c}^{\nu}(x,y)|^{p'} e^{2\nu\cdot x'p'/p}  \, d\mu_{\nu}(y)\right)^{1/p'}
\\ & = \|g\|_{L^p(d\mu_\nu)}(I_1(x) + I_2(x))^{1/p'},
\end{align*}
where $I_1(x)$ and $I_2(x)$ denote the integrals in the second last step over $\rho(x,y) \geq 1$ and $\rho(x,y) < 1$ respectively.

\medskip We shall first consider $I_1(x)$. In doing so, we also make use of the 9th identity of $\S 3.471$ of \cite{Tables_of_integrals_book} and the asymptotes of the modified Bessel function to conclude that 
$$ 
|K_{\alpha, \, c}^{\nu}(x,y)| \lesssim |B(x,1)|^{-1} e^{-\nu \cdot (x' + y')} \, e^{-\sqrt{b'c} \, \rho(x,y)},
$$ 
for $\rho(x,y) \geq 1.$ Consequently, 
\begin{align*}
I_1(x) & = \int_{\rho(x,y) \geq 1} e^{2\nu\cdot x' p'/p} |K_{\alpha, \, c}^{\nu}(x,y)|^{p'} e^{2\nu\cdot y'} \, dy \\
& \lesssim \int_{\rho(x,y)\geq 1} e^{2\nu\cdot x' p'/p} |B(x,1)|^{-p'} e^{-\nu \cdot (x' + y') p'} e^{-p'\sqrt{b'c}\rho(x,y)} e^{2\nu\cdot y'} \, dy \\
& = |B(x,1)|^{-p'} \int_{\rho(x,y)\geq 1} e^{\nu \cdot (x'-y') \left( \frac{p'}{p} - 1 \right)} e^{-p'\sqrt{b'c}\rho(x,y)}  \, dy \\
& = |B(x,1)|^{-p'} \sum_{k=0}^{\infty}\int_{2^k \leq \rho(x,y)<2^{k+1}} e^{\nu \cdot (x'-y') \left( \frac{p'}{p} - 1 \right)} e^{-p'\sqrt{b'c}\rho(x,y)}  \, dy \\ 
& \lesssim |B(x,1)|^{-p'} \sum_{k=0}^{\infty}\int_{2^k \leq \rho(x,y)<2^{k+1}} e^{-p'\sqrt{b'c} 2^{k-1}}  \, dy \\
& \lesssim |B(x,1)|^{-p'} \sum_{k=0}^{\infty} e^{-p'\sqrt{b'c} 2^{k-1}} |B(x,2^{k+1})| \\
& \lesssim |B(x,1)|^{-p'} \sum_{k=0}^{\infty} e^{-p'\sqrt{b'c}2^{k-1}} 2^{(k+1)Q} |B(x,1)| \\
& \lesssim |B(x,1)|^{-p'/p}.
\end{align*}

\medskip Next, we consider $I_2(x)$. We shall use the fact that $|K_{\alpha, \, c}^{\nu}(x,y)| \leq e^{-\nu \cdot (x'+ y')} \, |K_{\alpha, \, c}(x,y)|$, where $K_{\alpha, \, c}$ denotes the integral kernel of the operator $(G+cI)^{-\alpha/2}$ as in Lemma \ref{lemma:kernel_estimate_1}.
\begin{align*}
I_2(x) &= \int_{\rho(x,y) < 1} e^{2\nu\cdot x' p'/p} \, |K_{\alpha, \, c}^{\nu}(x,y)|^{p'} \, e^{2\nu\cdot y'} \, dy \\  
& \leq \int_{\rho(x,y)< 1} e^{2\nu\cdot x' p'/p}  e^{-\nu\cdot (x' + y') p'} \, |K_{\alpha, \, c}(x,y)|^{p'} \, e^{2\nu\cdot y'} \, dy \\
& = \int_{\rho(x,y)< 1} |K_{\alpha, \, c}(x,y)|^{p'} \, e^{\nu \cdot (x'-y') \left( \frac{p'}{p} - 1 \right)} \, dy\\ 
& \lesssim \int_{\rho(x,y)< 1} |K_{\alpha, \, c}(x,y)|^{p'} \, dy \\
& \leq |B(x,1)|^{-p'/p}, 
\end{align*}
where the last step follows from Lemma \ref{lemma:kernel_estimate_1}. 

\medskip Put together, we have shown that 
$$
|e^{2\nu\cdot x'/p} f(x)| \lesssim \|g\|_{L^{p}(d\mu_{\nu})} \, |B(x,1)|^{-1/p},
$$
which immediately implies the claimed estimate \eqref{part-2-theorem-embedding} and this completes the proof of part (ii) of Theorem \ref{thm:embedding}.

\medskip \noindent 
\textbf{\underline{Part (iii)}:} 
We shall prove that for $1 < p < \infty, \, 1 \leq q \leq \infty, \, \nu \neq 0$ and $\eta \geq 0$, if 
\begin{align}
\label{part-3-theorem-embedding}
\|V_\nu(\cdot,1)^{\eta} \, f\|_{L^q(d\mu_\nu)} \lesssim \|f\|_{L^p_\alpha(d\mu_\nu)},
\end{align}
for all $f \in L^p_\alpha(d\mu_\nu)$, then we must have $\eta = \frac{1}{p} - \frac{1}{q}$. 

\medskip 
We shall prove it via the transference technique, and for the same we take inputs from our work done in \cite[Section 4]{Garg-Garg-Riesz-transform-Grushin-drift-2026}. Let us begin with recalling the following. On unimodular Lie groups, working with the sub-Laplacian with drift $\Delta_\chi$ (see identity \eqref{def:sub-Lap-with-drift} in the introduction), it is known from \cite{Sobolev_embedding_Lie_groups} that an inequality of the form 
\begin{align}
\label{ineq:contrary-inequality-group-drift}
\|\mu_{\chi}(B(\cdot, 1))^{\eta} \, f\|_{L^{q}(d\mu_{\chi})} \lesssim \|f\|_{L^{p}_{\alpha}(d\mu_{\chi})}, 
\end{align}
can hold only if $\eta = \frac{1}{p}-\frac{1}{q}$. Note that in \cite{Sobolev_embedding_Lie_groups}, it was only argued that the above embedding fails when $p \neq q$ and $\eta = 0$, however it is easy to verify that by minor modifications in their arguments one can conclude the more general statement as mentioned above. 

\medskip 
For technical convenience, let us assume here onwards that $\nu = e_1$, and it will be clear from the proof that similar arguments are true for any arbitrary $\nu \neq 0$ by choosing appropriate parameters. Let us fix $\xi = (\xi',\xi'') \in \R^{d_1+d_2}$ such that $|\xi'|=1$ (to be chosen later) and let $U: L^2(\R^{d_1+d_2}) \to L^2(\R^{d_1+d_2})$ be the Euclidean translation by $\xi$, that is, 
$$
U f(x',x'') = f(x'+\xi', x''+\xi''),
$$ 
so that $U^{-1} f(x',x'')= f(x'-\xi', x''-\xi'').$ Also, for any $R>0,$ define $\Lambda_R:  L^2(\R^{d_1+d_2}) \to L^2(\R^{d_1+d_2})$ to be the isotropic Euclidean dilation, that is, 
$$
\Lambda_R f(x', x'')= f(Rx', Rx''),
$$ 
so that $\Lambda_R^{-1} = \Lambda_{R^{-1}}.$

\medskip 
On the Euclidean space $\R^{d_1+d_2}$, by abuse of notation, let us denote by $e_1$ the usual unit vector and by $\Delta_{e_1}$ the corresponding Laplacian with drift. We claim that for any $c>0$, for $C_c^\infty$ functions, we have 
\begin{equation} \label{eq:limit_giving_neg_result}
\lim_{R\to 0} \, (\Lambda_R U)\left(R^2G_{\frac{e_1}{R}}+cI\right)^{-\alpha/2} (U^{-1} \Lambda_R^{-1} f)(x', x'') = (\Delta_{e_1}+cI)^{-\alpha/2}f(x).  
\end{equation}

Assuming \eqref{eq:limit_giving_neg_result} for now, let us first prove our main result by contradiction. That is, let us assume that for some $0 \leq \eta \neq \frac{1}{p} - \frac{1}{q}$, we have 
$$
\|V_{e_1}(\cdot, 1)^{\eta} \, f\|_{L^q(d\mu_{e_1})} \lesssim  \|(G_{e_1}+cI)^{\alpha/2}f\|_{L^p(d\mu_{e_1})}, 
$$
or equivalently, 
\begin{align}
\label{ineq:contrary-inequality-grushin-drift}
\|V_{e_1}(\cdot, 1)^{\eta} (G_{e_1}+cI)^{-\alpha/2}f\|_{L^q(d\mu_{e_1})} \lesssim \|f\|_{L^p(d\mu_{e_1})}.
\end{align}

\medskip 
We know that $H_{t,\, se_1}(x,y) = s^{d_1+2d_2} \, H_{s^2t, \, e_1}(\delta_s x, \delta_s y)$ for any $s > 0$ (see \cite[Subsection 2.5.2]{Garg-Garg-Riesz-transform-Grushin-drift-2026}), using which one can easily verify that 
\begin{align}
\label{identity:scaling-grushin-drift-fractional-power}
\left(R^2G_{\frac{e_1}{R}}+cI\right)^{-\alpha/2} = \delta_R^{-1} \, (G_{e_1}+cI)^{-\alpha/2} \, \delta_R. 
\end{align}

\medskip Making use of identity \eqref{identity:scaling-grushin-drift-fractional-power}, we get 
\begin{align*} 
& \left(\int_{\R^{d_1+d_2}}\left|(\Lambda_R U)\left(R^2G_{\frac{e_1}{R}}+cI\right)^{-\alpha/2} (U^{-1}\Lambda_R^{-1}f)(x', x'') \right|^q  e^{2x_1'(1+\eta q)} \, dx \right)^{1/q} \\
& \quad = \left(\int_{\R^{d_1+d_2}}\left|(\Lambda_R \, U \, \delta_R^{-1})\left(G_{e_1}+cI\right)^{-\alpha/2}(\delta_R \, U^{-1} \, \Lambda_R^{-1}f)(x', x'')\right|^q  e^{2x_1'(1+\eta q)} \, dx \right)^{1/q} \\
& \quad \leq e^{\frac{-2\xi_1'\eta}{R}}\left(\int_{\R^{d_1+d_2}}\left|(\Lambda_R \, U \, \delta_R^{-1})(V_{e_1}(\cdot, 1)^{\eta}\left(G_{e_1}+cI\right)^{-\alpha/2}(\delta_R \, U^{-1} \, \Lambda_R^{-1}f))(x', x'')\right|^q  e^{2x_1'} \, dx \right)^{1/q} \\
& \quad = e^{\frac{-2\xi_1'\eta}{R}} \, e^{\frac{-2\xi_1'}{Rq}} R^{d_2/q}\left(\int_{\R^{d_1+d_2}}\left|V_{e_1}(\cdot, 1)^{\eta} \left(G_{e_1}+cI\right)^{-\alpha/2}(\delta_R \, U^{-1} \, \Lambda_R^{-1}f)\left(x',x''\right)\right|^q e^{2x_1'} \, dx \right)^{1/q} \\
& \quad \lesssim e^{\frac{-2\xi_1'\eta}{R}} \, e^{\frac{-2\xi_1'}{Rq}} R^{d_2/q} \left( \int_{\R^{d_1+d_2}} \left| (\delta_R \, U^{-1} \, \Lambda_R^{-1}f)\left(x',x''\right)\right|^p e^{2x_1'} \, dx \right)^{1/p} \\ 
& \quad = e^{\frac{-2\xi_1'\eta}{R}} \, e^{\frac{-2\xi_1'}{Rq}} R^{d_2/q}\left(\int_{\R^{d_1+d_2}}\left|f\left(x'-\frac{\xi'}{R},Rx''-\frac{\xi''}{R}\right)\right|^p  e^{2x_1'} \, dx \right)^{1/p} \\
& \quad = e^{\frac{2\xi_1'}{R}\left(\frac{1}{p}-\frac{1}{q}-\eta\right)} R^{d_2\left(\frac{1}{q}-\frac{1}{p}\right)} \left(\int_{\R^{d_1+d_2}}\left|f\left(x',x''\right)\right|^p  e^{2x_1'} \, dx \right)^{1/p}.
\end{align*}

Now, there are two possibilities: 
\begin{itemize}
\item When $\frac{1}{p} - \frac{1}{q} - \eta > 0$. In this case, we work with $\xi' = -e_1$, 

\item When $\frac{1}{p} - \frac{1}{q} - \eta < 0$. In this case, we work with $\xi' = +e_1$. 
\end{itemize} 

With the above choice of $\xi'$, it is easy to observe that in either of the two cases, one has 
$$
e^{\frac{2\xi_1'}{R}\left(\frac{1}{p}-\frac{1}{q}-\eta\right)} R^{d_2\left(\frac{1}{q}-\frac{1}{p}\right)} \lesssim_{p, \,q} 1,
$$ 
uniformly over $0 < R \leq 1$. 

\medskip 
Summarising, we have that on $0 < R \leq 1$, 
\begin{align}
\label{ineq:dilated-contradiction-1}
\left(\int_{\R^{d_1+d_2}}\left|(\Lambda_R U)\left(R^2G_{\frac{e_1}{R}}+cI\right)^{-\alpha/2} (U^{-1}\Lambda_R^{-1}f)(x' , x'') \right|^q  e^{2x_1'(1+\eta q)} \, dx \right)^{1/q} \lesssim \|f\|_{L^p(d\mu_{e_1})}.
\end{align}

As a consequence, for the character $\chi(x) = e^{2x_1'}$ on $\R^{d_1+d_2}$, we get 
\begin{align*} 
& \|\mu_{\chi}(B(\cdot, 1))^{\eta}(\Delta_{e_1}+cI)^{-\alpha/2}f\|_{L^q(d\mu_{\chi})} \\
& = \left(\int_{\R^{d_1+d_2}}\left|e^{2x_1'\eta}(\Delta_{e_1}+cI)^{-\alpha/2}f(x)\right|^q e^{2x_1'} \, dx\right)^{1/q} \\ 
& = 
\left(\int_{\R^{d_1+d_2}}\left|\lim_{R\to 0} \, (\Lambda_R U) \left(R^2G_{\frac{e_1}{R}}+cI\right)^{-\alpha/2} (U^{-1}\Lambda_R^{-1}f)(x', x'') \right|^q  e^{2x_1'(1+\eta q)} \, dx \right)^{1/q}\\ 
& \lesssim \liminf_{R \to 0} \, \left(\int_{\R^{d_1+d_2}}\left|(\Lambda_R U) \, \left(R^2G_{\frac{e_1}{R}}+cI\right)^{-\alpha/2} (U^{-1}\Lambda_R^{-1}f)(x', x'') \right|^q  e^{2x_1'(1+\eta q)} \, dx \right)^{1/q} \\
& \lesssim \|f\|_{L^p(d\mu_{e_1})} = \|f\|_{L^p(d\mu_{\chi})},
\end{align*}
where the second step follows from the pointwise convergence \eqref{eq:limit_giving_neg_result}, and the second last step follows from \eqref{ineq:dilated-contradiction-1}. 

\medskip 
But, the concluded embedding contradicts \eqref{ineq:contrary-inequality-group-drift}, and hence \eqref{ineq:contrary-inequality-grushin-drift} is not possible for $0 \leq \eta \neq \frac{1}{p} - \frac{1}{q}$. 

\medskip It only remains to verify the pointwise convergence as claimed in \eqref{eq:limit_giving_neg_result}. Take and fix a $C_c^\infty$ function $f$ and set $F(x) = f(x) \, e^{x_1'}$. Following the expression derived right after (4.2) in \cite{Garg-Garg-Riesz-transform-Grushin-drift-2026}, we have 
\begin{align*} 
& (\Lambda_R U)e^{-tR^2G_{e_1/R}} (U^{-1} \Lambda_R^{-1} f)(x', x'') \\ 
& \quad = (2 \pi)^{-d_2} \int_{\R^{d_1+d_2}} e^{-i\lambda'' \cdot x''} \, e^{-t} \, e^{-x_1'} \left(\cosh{(2tR|\lambda''|)}\right)^{-d_1/2} e^{2\pi i x'\cdot \lambda'} \\ 
& \qquad \exp{\left( \frac{-|\lambda''|\tanh{(tR|\lambda''|)}}{R} \left(1 - \frac{\tanh{(tR|\lambda''|)}}{2 \, \coth{(2tR|\lambda''|)}}\right) |Rx' + \xi'|^2 \right)} \, \widehat{F}\left(\lambda',\frac{-\lambda''}{2\pi}\right) \\ 
& \qquad \exp{\left(\frac{-2\pi^2|\lambda'|^2}{R|\lambda''|\coth{(2tR|\lambda''|)}}\right)} \exp{\left(-2\pi i \frac{\tanh{(tR|\lambda''|)}}{R \, \coth{(2tR|\lambda''|)}} \lambda'\cdot \left(Rx'+ \xi'\right)\right)} \, d\lambda,
\end{align*}
which implies that 
\begin{align} \label{eq:limit_step}
& (\Lambda_R U) \left(R^2G_{\frac{e_1}{R}}+cI\right)^{-\alpha/2} (U^{-1}\Lambda_R^{-1}f)(x', x'') \\
&\nonumber = \frac{1}{\Gamma(\alpha/2)} \int_{0}^\infty t^{\frac{\alpha}{2}-1} e^{-ct} (\Lambda_R U)e^{-t R^2 G_{e_1/R}} (U^{-1}\Lambda_R^{-1}f)(x', x'') \, dt  \\
&\nonumber = \frac{(2\pi)^{-d_2}}{\Gamma(\alpha/2)} \int_{0}^\infty t^{\frac{\alpha}{2}-1} e^{-ct} \int_{\R^{d_1+d_2}} e^{-i\lambda'' \cdot x''} e^{-t} e^{-x_1'} \left(\cosh{(2tR|\lambda|)}\right)^{-d_1/2} e^{2\pi i x'\cdot \lambda'} 
\\ & \nonumber \quad \exp{\left( \frac{-|\lambda''|\tanh{(tR|\lambda''|)}}{R} \left(1 - \frac{\tanh{(tR|\lambda''|)}}{2 \, \coth{(2tR|\lambda''|)}}\right) |Rx' + \xi'|^2 \right)} \, \widehat{F}\left(\lambda',\frac{-\lambda''}{2\pi}\right) 
\\ & \nonumber \quad \exp{\left(\frac{-2\pi^2|\lambda'|^2}{R|\lambda''|\coth{(2tR|\lambda''|)}}\right)} \exp{\left(-2\pi i \frac{\tanh{(tR|\lambda''|)}}{R \, \coth{(2tR|\lambda''|)}} \lambda'\cdot \left(Rx'+ \xi'\right)\right)} \, d\lambda \, dt, 
\end{align}
and then upon taking the limit as $R \to 0$, we get 
\begin{align*}
& \lim_{R\to 0} (\Lambda_R U)\left(R^2G_{\frac{e_1}{R}}+cI\right)^{-\alpha/2} (U^{-1}\Lambda_R^{-1}f)(x', x'') \\ 
& = \frac{(2\pi)^{-d_2}}{\Gamma(\alpha/2)} \int_{0}^\infty t^{\frac{\alpha}{2}-1} e^{-ct} \int_{\R^{d_1+d_2}} e^{-i\lambda'' \cdot x''} e^{-t} e^{-x_1'} e^{2\pi i x'\cdot \lambda'} e^{-|\lambda''|^2t} e^{-4\pi^2t|\lambda'|^2} \widehat{F}\left(\lambda',\frac{-\lambda''}{2\pi}\right) 
\, d\lambda \, dt \\
& = \frac{1}{\Gamma(\alpha/2)} \int_{0}^\infty t^{\frac{\alpha}{2}-1} e^{-ct} \int_{\R^{d_1+d_2}} e^{-t} e^{-x_1'} e^{2\pi i x \cdot \lambda} e^{-4\pi^2t|\lambda|^2} \widehat{F}\left(\lambda\right) \, d\lambda \, dt \\
& = (\Delta_{e_1}+cI)^{-\alpha/2}f(x),
\end{align*}
where we have used the generalized Lebesgue dominated convergence theorem to take the limit inside the integral, and this completes the proof of \eqref{eq:limit_giving_neg_result} and hence of part $(iii)$.
\end{proof}


\section*{Acknowledgments}
We are thankful to Sayan Bagchi for a number of insightful discussions throughout the development of this work. First author is grateful to Indian Institute of Science Education and Research (IISER) Bhopal for the Senior Research Fellowship (SRF). Second author was partially supported by the Anusandhan National Research Foundation (ANRF), India, under the research project ANRF/ARG/2025/003732/MS.


\providecommand{\bysame}{\leavevmode\hbox to3em{\hrulefill}\thinspace}
\providecommand{\MR}{\relax\ifhmode\unskip\space\fi MR }
\providecommand{\MRhref}[2]{%
  \href{http://www.ams.org/mathscinet-getitem?mr=#1}{#2}
}
\providecommand{\href}[2]{#2}

\end{document}